\documentclass[12pt, reqno]{amsart}
 \usepackage{tikz}
 \usepackage{hyperref, mathtools,mathrsfs}
 \usepackage[skip=4pt]{caption}
 \usepackage{amssymb,amsmath,graphicx,amsthm}
  \usetikzlibrary{positioning}
  \usepackage{tkz-euclide}
\usepackage{rotating}
\usepackage{color,xcolor}
\usepackage{geometry}
\geometry{a4paper, top=3cm, bottom=3cm, left=2cm, right=2cm}
\definecolor{darkred}{rgb}{1,0,0} 
\definecolor{darkgreen}{rgb}{0,0.8,0}
\definecolor{darkblue}{rgb}{0,0,1}

\hypersetup{colorlinks,
linkcolor=darkblue,
filecolor=darkgreen,
urlcolor=darkred,
citecolor=darkgreen}

 \def\f{\mathtt{f}}

 

 \def\bt{\begin{theorem}}
 	\def\el{\end{lemma}}
 \def\bl{\begin{lemma}}
 	\def\et{\end{theorem}}
 \def\bp{\begin{proposition}}
 	\def\ep{\end{proposition}}
 \def\bd{\begin{definition}}
 	\def\ed{\end{definition}}
 \def\br{\begin{remark}}
 	\def\er{\end{remark}}



 \def\R{{\mathbb R}}

 \def\C{\mathbb C}
 \def\B{\mathbb B}
 
 \def\N{{\mathbb N}}

\def\S{{\mathbb S}}
\def\S1{{\mathbb S^1}}

\def\D{{\mathbb D}}

\def\f{{\bold f}}

 \def\dib{\bar\partial}
 \def\label#1{\label{#1}{\bf(#1)}~}

 \numberwithin{equation}{section}

 \frenchspacing
 
 \theoremstyle{plain}
 \newtheorem*{theorem*}{Theorem}
 \newtheorem{example}{Example}[section]
 
 \newtheorem{theorem}{Theorem}[section]
 
 \newtheorem{corollary}[theorem]{Corollary}
 
 \newtheorem{lemma}[theorem]{Lemma}
 
 \newtheorem{proposition}[theorem]{Proposition}

 \theoremstyle{definition}
 
 \newtheorem{definition}[theorem]{Definition}
 
 \theoremstyle{remark}

 \newtheorem{remark}[theorem]{Remark}

 \newcommand{\p}{\partial}


 
 \theoremstyle{plain} 

 %
 
 \def\Re{\text{Re}}
  \def\Im{\text{Im}}

 \begin{document}
 	
 	\title{Testing families of analytic discs  in the unit ball}  
\author{Luca Baracco}
\address{Dipartimento di Matematica Tullio Levi-Civita, Universit\`a di Padova, via Trieste 63, 35121 Padova, Italy}
\email{baracco@math.unipd.it}

\author{Stefano Pinton}
\address{Dipartimento di Matematica, Politecnico di Milano, via E. Bonardi 9, 20133 Milano,, Italy}
\email{stefano.pinton@polimi.it}

 \begin{abstract} Let $a,b,c\in \C^2$ be three non collinear points such that their mutual joining complex lines do not intersect the unit ball $\B^2$ and such that the line through $a$ and $b$ is tangent to $\B^2$. Then the set of lines concurrent to $a,b$ and $c$ is a testing family for continuous functions on $\mathbb{S}^3$. This improves a result by the authors and solves a case left open in the literature as described by Globevnik.  
	
\end{abstract}
  	\maketitle
  	\maketitle

 \tableofcontents 	
 	\section{Introduction and main results}
One of the most striking properties of holomorphic functions in several variables is the fact that the property of being holomorphic can be checked separately on each variable while the others are kept as constants. This fact is known as Hartogs Theorem. Another remarkable property, which descend for instance from integral representation formulas, is that these functions are uniquely determined by their value on smaller sets such as the boundary of a domain or more generally on a generic manifold (see section \ref{S3} for the definition).
It is natural to ask if these two properties can be combined or in other words:Given a function $f$ on the boundary of a domain $\Omega\subset \C^n$ such that $f$ is holomorphic when restricted on a set of complex curves (like the coordinate lines) is then $f$ the boundary value of a holomorphic function $F\in \text{Hol}(\Omega)$ ? This last question needs to be clarified because the function is defined on a set of dimension smaller than $2n$ and so the restriction on complex curves or with respect to a group of variables might not have sense. We shall denote by $\D=\{ \tau\in \C |\ |\tau|<1\}$ the standard unit disc in $\C$, by $\overline{\D}$ its closure, $\p \D$ its boundary. For a general disc of radius $R$ and center $c$ we write $\D_{c,R}=\{\tau\in\C|\ |\tau-c|<R\}$.
\begin{definition}
	An analytic disc in $\C^n$ for $n>1$ is a continuous, injective  map $A:\overline{\D}\rightarrow \C^n$ which is holomorphic on $\D$.
	The set $A(\p\D)$ is called the boundary of $A$ and is denoted by $\p A$. We say that $A$ is attached to a set $M$ if $\p A \subset M$.  
\end{definition}          
\begin{definition}
	Let $M\subset \C^n$ and $f:M\rightarrow \C$ a continuous function. If $A$ is an analytic disc attached to $M$ we say that $f$ extends holomorphically on $A$ if there exists a continuous function $F:\overline{\D} \rightarrow \C$  holomorphic on $\D$ such that $f(A(\tau))=F(\tau)$ for $\tau\in \p\D$.  
\end{definition}
We will use the term "disc" for both the map $A$ and also the image $A(\D)$. We will say that a disc is straight or a line if $A(\D)$ is contained in a complex line. Given a smooth domain $\Omega\subset \C^n$ and a function $f:\p\Omega\rightarrow \C$ by saying that $f$ extends holomorphically on a complex line $l(\tau)=a\tau +b$, $a,b\in\C^n$ we mean that
if $D:=\{\tau\in \C |\  l(\tau) \in \Omega\}$ is a bounded simply connected domain and if $\phi :\D \rightarrow D$ is the Riemann's map, then the function $f\circ \phi :\p\D \rightarrow \C$ extends holomorphically. If $F$ is the holomorphic extension of $f\circ \phi$ we will refer to the extension of $f$ on the line $l$ as the function on the analytic disc $A=l\circ \phi$ (i.e. on the image of the map $A$) $f(A(\tau)) :=F(\tau)$.
Let $\Lambda\subset \R^m$ be a set of parameters, a family of discs on $\Lambda$ is a  continuous map $A:\bar{\D}\times \Lambda\rightarrow\C^n$, $A(\tau,\lambda)=A_{\lambda}(\tau)$ which is holomorphic in $\tau$. 
\begin{definition}
	Let $A_\lambda$, $\lambda\in\Lambda$ be a family of analytic discs attached to the boundary of a domain $\Omega\subset \C^n$. We say that $A_\lambda$ is a testing family for $\mathcal{C}^k$ functions if the following holds:
	If $f\in \mathcal{C}^k(\p\Omega)$ extends holomorphically on $A_\lambda$ for all $\lambda\in\Lambda$ then $f$ is the boundary value of a holomorphic function $F\in \text{Hol}(\Omega)\cap \mathcal{C}^k(\overline{\Omega})$ 
\end{definition}
It is difficult for general domains to prove that a given  family of discs is testing and it is harder to determine the optimal families (i.e. the smallest possible). A model case which can help to understand the problem is when $\Omega$ is the unit ball and the testing discs are chosen among the linear ones. This particular case has been widely studied. We mention here the work of \cite{A11,B13,B16,G12,G12bis,L07,L18} and the most refined version of this case which is in \cite{G12}. In that paper it is considered the family of lines passing through three non-aligned points $a,b,c\in \C^2$ where at least one of the joining lines (say $a$ with $b$) meets the interior of the ball. Such family of lines is a testing family for continuous functions if and only if $a\cdot b \neq 1$ and either $a\cdot c\neq 1$ or $b\cdot c\neq 1$ here $\cdot$ is the Hermitian inner product. Some attempts have been made by the authors to remove the condition on the joining line (see \cite{BP18} and \cite{BF19}) at the price of requiring $f$ to be real analytic. 
We note that while in \cite{BP18}  only two points $a,b$ are needed for real analytic functions if the joining line is tangent to the ball, for lower regularities this is not true as the following counterexample shows
\begin{example}
    The function $f:\mathbb{S}^3 \rightarrow \C$
    \begin{equation}\label{example}
        f(z_1,z_2)=\left(\frac{\bar{z}_1 -\bar{t}_1}{\bar{z}_2 -1}\right)(z_1\bar{t}_2 +z_2-1)\exp\left(\frac{-1}{\sqrt{1-z_2}}\right)
    \end{equation}
    is smooth, extends holomorphically on all lines concurrent to $(t_1,1)$ and $(t_2,1)$. Yet $f$ is not the boundary value of a holomorphic function because it is not CR (see Definition \ref{CR}.) Note that multiplying by extra factors of type $(z_1\bar{t}_k+z_2-1)$ we have a function that extends on lines concurrent to a finite set of points $(t_k,1)$. So the set of lines concurrent to a finite number of points on the same line tangent to the sphere is not a testing family. 
    
\end{example}
In this paper we want to treat the case when one of the joining lines, say $a$ with $b$, is tangent to the unit sphere.          
In this paper we prove that 

\begin{theorem}\label{t1} Let $a,b \in \C^2\setminus \B^2 $ be two distinct points such that the line joining $a$ and $b$ is tangent to the unit sphere $\mathbb{S}^3$ at a point $p$.  
	Let $f:\partial\B^2 \rightarrow \C$ be a continuous function such that $f$ extends holomorphically on every complex line through $a$ and $b$. Then $f$ extends holomorphically on every complex line through $p$.  
\end{theorem}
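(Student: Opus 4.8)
The plan is to first normalize the geometry. After a unitary change of coordinates we may assume the tangency point is $p=(0,1)\in\mathbb{S}^3$ and the tangent complex line at $p$ is $\{z_2=1\}$; thus $a=(t_1,1)$ and $b=(t_2,1)$ for distinct $t_1,t_2\in\C$ (the case where $a$ or $b$ lies on the plane $\{z_2=1\}$ but the line still passes through $p$). A complex line through $a$ and $b$ is a line of the form $\tau\mapsto(a+\tau(b-a))$ reparametrized, and since all such lines lie in the $2$-plane through $a,b$, we should instead think of the family of \emph{all} lines through $p$ and describe, for each such line $l$, a companion line through $a$ and $b$ that we will compare it with. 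Concretely, the lines through $a$ and $b$ sweep out the pencil of (complex, affine) lines, and for a generic direction there is exactly one line through $a,b$ in a given $2$-plane; the key point is that as a parameter $\lambda$ varies, the line $l_\lambda$ through $p$ and the line $m_\lambda$ through $a,b$ can be made to intersect at a common boundary point of their respective discs in $\mathbb{S}^3$, or to be linked through a one-parameter deformation staying attached to $\mathbb{S}^3$.

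**Reduction to a continuity / gluing argument.** The main idea I would pursue is the standard one for such testing-family results: encode the hypothesis ``$f$ extends holomorphically on every line through $a,b$'' as the statement that a certain associated function (obtained by integrating $f$ against the Cauchy/Szeg\H o kernel along these discs, or simply the holomorphic extension $F_m$ on each disc $m$) is well-defined and varies continuously with the disc. Then take a target line $l$ through $p$; its intersection disc $D_l=\{\tau:\,l(\tau)\in\B^2\}$ is a genuine disc (since $l$ is not the tangent line, it meets the open ball). I would connect $l$ to the family of lines through $a,b$ by a continuous path of lines $\gamma_s$, $s\in[0,1]$, with $\gamma_0$ a line through $a,b$ and $\gamma_1=l$, chosen so that along the path the discs $\gamma_s\cap\B^2$ vary continuously and — crucially — so that at every $s$ the line $\gamma_s$ either passes through $a,b$ or shares enough boundary data with such a line that the holomorphic extension of $f|_{\partial\B^2}$ propagates. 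The cleanest implementation: fix a point $q\in\mathbb{S}^3$ near $p$ on the target disc $\partial D_l$; the lines through $q$ and through points on the segment $[a,b]$ form a pencil, and a Morera-type / one-variable argument (extension along a continuous family sharing a common boundary arc, cf. the classical ``sliding discs'' technique of Globevnik--Stout) forces $f$ to extend on the limit line $l$.

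**Propagation via shared boundary points.** More precisely, here is the mechanism I expect to use. Two distinct complex lines in $\C^2$ generically meet in one point; if that point lies in $\mathbb{S}^3$ it is a common boundary point of the two analytic discs, and the holomorphic extensions $F_1,F_2$ on the two discs then agree at that point (both equal $f$ there). This is not yet enough, but by moving one endpoint one gets a \emph{one-parameter family of lines through a fixed point $w\in\mathbb{S}^3$}, all attached to $\mathbb{S}^3$, on each of which $f$ extends, and whose extensions agree at $w$; the union of these discs is (a piece of) a CR manifold or a solid region, and a continuity/uniqueness argument (holomorphic functions agreeing on a real-analytic arc through $w$) lets one transport the extension property from one line of the pencil to the next. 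Iterating/integrating this over the pencil, starting from the lines through both $a$ and $b$ and ending at an arbitrary line through $p$, yields the conclusion. I would present this as: (i) lines through $a,b$ and lines through $p$ all belong to one irreducible family of lines meeting a common tangent line; (ii) $f$ extends on a relatively open, closed, nonempty subfamily; hence on all of it.

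**Main obstacle.** The hard part will be the connectedness/propagation step where the line becomes tangent — i.e. controlling what happens as the disc $\gamma_s\cap\B^2$ degenerates (pinches to the point $p$) when $\gamma_s$ approaches the tangent line joining $a,b$. Near that degeneration the Riemann maps $\phi_{\gamma_s}$ blow up, the discs shrink to a point, and one must show that the holomorphic extensions nevertheless pass continuously through the limit; this is exactly the delicacy that the Example in the introduction shows is genuine (smoothness is destroyed, and only the weaker ``extends on lines through $p$'' survives, not ``$f$ is CR''). I would handle it by a careful local analysis near $p$ in the coordinates above: write the lines through $a=(t_1,1)$, $b=(t_2,1)$ explicitly as $z_2=1+\epsilon(\text{something})$, compute the intersection discs as $\epsilon\to0$, and show that the family of extensions extends continuously to the limiting pencil of lines through $p=(0,1)$, using the uniform continuity of $f$ on the compact set $\mathbb{S}^3$ to get equicontinuity of the $F_m$'s and hence a convergent-subsequence / normal-families argument pinning down the limit.
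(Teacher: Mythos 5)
Your proposal does not reach a proof: the central propagation step is missing, and the mechanism you describe for it would fail. The set of discs on which the hypothesis gives you an extension is the union of the two pencils of lines through $a$ and through $b$; the lines through $p$ form a third pencil, and these three pencils pairwise intersect only in the common tangent line $z_2=1$, whose trace on $\B^2$ is empty. Holomorphic extendibility is a \emph{closed} condition under continuous deformation of attached discs (the moment conditions $\int_{\p\D}\tau^n f(A_s(\tau))\,d\tau=0$ pass to the limit), but it is not an \emph{open} one, so your ``relatively open, closed, nonempty subfamily'' argument has no open step. Likewise, any path of lines $\gamma_s$ from the pencil through $a,b$ to a given line through $p$ leaves the hypothesis set immediately, and the only degeneration available inside that set (lines through $a$ or $b$ tending to the tangent line) produces discs that shrink to the single point $p$, so the normal-families limit you invoke yields no information about a non-tangent line through $p$. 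The shared-boundary-point mechanism is also too weak: two discs meeting at a point of $\mathbb{S}^3$ have extensions that agree only at that one point (where both equal $f$); there is no arc on which they are known to agree, and showing that a one-parameter pencil of such discs carries a single CR function on its union is essentially the whole content of the theorem, not a step one can quote. The Example in the introduction is precisely a warning that arguments using only the local pencil geometry near the tangent line cannot succeed.

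The paper's route is entirely different and none of its ingredients appears in your outline. One normalizes $a=(t_1,1)$, $b=(t_2,1)$, introduces the one-parameter group $G$ of automorphisms of $\B^2$ fixing the line $z_2=1$ pointwise, and averages $gf$ (with $g=\exp(-1/\sqrt{1-z_2})$ a damping factor) over the $G$-orbits. The average descends to a function $h$ of the slope $\zeta=z_1/(z_2-1)$, and the hypothesis becomes: $h$ extends holomorphically from every circle centred at $-1/\bar t_1$ and at $-1/\bar t_2$, with $h(\zeta)=O(|\zeta|e^{-|\zeta|})$ at infinity. Proposition \ref{p4} --- proved by complexifying the circles into the Levi-flat hypersurfaces $M_0$, $M_1$, propagating CR extendibility along their leaves via Tumanov's theorem, a local Bochner tube argument, and a Hans Lewy/Morera construction --- shows $h\equiv 0$. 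The vanishing of these averages is then converted, through a change of variables and moment conditions, into the statement that $gf$ extends holomorphically on every line through $p$, and a Phragm\'en--Lindel\"of argument removes the factor $g$. To repair your approach you would need to supply a genuine propagation lemma across the tangency, and the paper's experience suggests this requires the full averaging-plus-CR machinery rather than a sliding-discs continuity argument.
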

As a consequence we have the following
\begin{corollary} \label{C1}
	Let $a,b,c \in \C^2\setminus \B^2 $ be three points not on the same complex line and assume that the line joining $a$ and $b$ is tangent to the unit sphere $\mathbb{S}^3$.  
	Let $f:\partial\B^2 \rightarrow \C$ be a continuous function such that $f$ extends holomorphically on every complex line through $a,b$ and $c$. Then $f$ is the boundary value of a holomorphic function $F\in \text{Hol}(\B^2)\cap \mathcal{C}^0(\overline{\B^2})$.
\end{corollary}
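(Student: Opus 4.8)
The plan is to deduce the corollary from Theorem \ref{t1} together with the known result in \cite{G12} on three non-collinear points. Let $p$ be the point of tangency of the line through $a$ and $b$. By Theorem \ref{t1}, the hypothesis that $f$ extends holomorphically on every line through $a$ and $b$ forces $f$ to extend holomorphically on every complex line through $p$. Thus $f$ extends holomorphically on every line concurrent to the two points $p$ and $c$. I would then observe that, since $c\notin\B^2$ and $c$ is not on the line through $a,b$ (hence not on the tangent line at $p$), the pair $p,c$ determines a line that meets the closed ball only at $p$ or not at all; in any case we are now in a configuration of lines through a point $p\in\mathbb{S}^3$ and an exterior point $c$.

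The second step is to feed this into a three-point testing result. Pick a third point, call it $c'$, on the interior-meeting side so that $p,c,c'$ are non-collinear, at least one joining line meets $\operatorname{int}\B^2$, and the inner-product conditions $p\cdot c\neq 1$ (respectively $p\cdot c'\neq1$, $c\cdot c'\neq1$) of \cite{G12} hold. The point is that $f$ extends holomorphically on \emph{every} line through $p$, in particular on all lines of the three-point family associated to $p,c,c'$; and it also extends on all lines through $c$ by hypothesis. Actually the cleanest route is: once $f$ extends on all lines through the single boundary point $p$, one can invoke the standard fact (see \cite{G12} and the references \cite{A11,B13,B16,L07}) that the pencil of all complex lines through a fixed point of $\mathbb{S}^3$, together with one additional generic point off the ball, is already a testing family for $\mathcal C^0$ functions; equivalently, extension on all lines through $p$ plus extension on all lines through $c$ yields that $f$ is CR on $\mathbb{S}^3$, and hence, by the Hartogs--Bochner phenomenon for the ball, the boundary value of some $F\in\mathrm{Hol}(\B^2)\cap\mathcal C^0(\overline{\B^2})$.

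Concretely, the mechanism is the slicing/tangential Cauchy--Riemann argument: extension of $f$ on a complex line $\ell$ attached to $\mathbb{S}^3$ means exactly that $\int_{\partial(\ell\cap\B^2)} f\,g = 0$ for all $g$ holomorphic on $\ell\cap\B^2$ and continuous up to the boundary; letting $\ell$ vary over all lines through $p$ and all lines through $c$ produces enough moment conditions to conclude that $\dbar_b f = 0$ in the distributional sense on $\mathbb{S}^3$. Once $f$ is CR and continuous on $\mathbb{S}^3$, the classical result that continuous CR functions on the boundary of the ball extend holomorphically inside finishes the proof.

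The main obstacle is to make precise that the family "all lines through the boundary point $p$, plus all lines through $c$" is indeed sufficient, i.e. that the degenerate direction hidden at $p$ (the tangent complex line, which contributes no moment condition because $\ell\cap\B^2=\varnothing$) does not create a genuine gap. This is precisely the phenomenon flagged by the Example in the introduction, where finitely many points on the tangent line fail; here, however, we have the \emph{whole pencil} through $p$ at our disposal, which is why the argument goes through. I would handle this by quoting the relevant statement from \cite{G12} (the three non-collinear points with the two inner-product conditions), applied to $p$ together with $c$ and an auxiliary third point chosen so that the non-degeneracy conditions $p\cdot c\ne1$, etc., are met — which is possible because $c\notin\B^2$ and $p,c$ are not on a common line tangent along the bad direction; the only thing to check is that the inner-product conditions can be arranged, and that check is a short computation using $a\cdot b=1$ (tangency) but $a,b,c$ non-collinear.
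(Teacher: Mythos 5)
Your overall strategy is the right one and is the paper's: use Theorem \ref{t1} to replace the pair $a,b$ by the tangency point $p$, and then feed the pair $p,c$ into the two-point result of \cite{G12} (Corollary 1.3 there). The first step is fine. The gap is in the geometric verification that legitimizes the second step. You assert that the line through $p$ and $c$ ``meets the closed ball only at $p$ or not at all.'' This is backwards, and if it were true it would block the argument, since the hypothesis of Globevnik's theorem is precisely that the joining line meets the \emph{open} ball. In fact the line through $p$ and $c$ does meet $\B^2$: writing it as $p+\tau(c-p)$ one has $|p+\tau(c-p)|^2=1+2\Rre\big(\tau\langle c-p,p\rangle\big)+|\tau|^2|c-p|^2$, and $\langle c-p,p\rangle=\langle c,p\rangle-1\neq 0$ exactly because $c$ does not lie on the tangent complex line $\{z:\langle z,p\rangle=1\}$ at $p$, which \emph{is} the line through $a$ and $b$; so for a suitable small $\tau$ the point lies inside the ball. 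This observation is the entire role of the hypothesis that $a,b,c$ are not collinear, and it also gives you the nondegeneracy condition $\langle p,c\rangle\neq 1$ for free. (Incidentally, tangency of the line through $a$ and $b$ at $p$ is equivalent to $\langle a,p\rangle=\langle b,p\rangle=1$, not to $a\cdot b=1$.)

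The two fallback routes you offer do not repair this. Introducing an auxiliary third point $c'$ is not available: the three-point theorem of \cite{G12} requires $f$ to extend on all lines through \emph{each} of the three points, and nothing in the hypotheses gives extension along lines through $c'$. The $\dbarb$/moment-condition paragraph is a description of how results of this type are proved in general, not a proof; in particular it does not by itself explain why the degenerate tangent line at $p$ causes no loss. Once the geometric point above is corrected, the proof closes exactly as in the paper: after a rotation take $p=(0,1)$, note that the line joining $p$ and $c$ meets $\B^2$, and apply Corollary 1.3 of \cite{G12} to the pair $p,c$.
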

The proof is inspired by the one in \cite{G12}. The paper is divided in five sections: In section \ref{S2} we introduce a subgroup of automorphisms with which we perform an averaging procedure on $f$ that allows us to reduce our problem to one complex dimension. In section \ref{S3} we introduce the CR geometry techniques that we need in section \ref{S4} to prove that the "averaged" function is $0$. With this in hand we will prove in section \ref{S5} the Theorem \ref{t1} and from this and \cite{G12} we have Corollary \ref{C1}.   
\section{Reduction to a one dimensional problem }\label{S2}
\subsection{The group $G$ of preserving automorphisms}
It is not restrictive to assume that $a=(t_1,1)$ and $b=(t_2,1)$ with $t_1,t_2 \in \C\setminus \{ 0\}$. 
In order to simplify the problem, we exploit the symmetry of the ball and find the automorphisms that preserve the concurrent lines to $a$ and $b$.
First we recall the general expression of an automorphism of the unit ball:
  \begin{definition}
	For a nonzero vector $a=(a_1,a_2)\in \B^2$ we define the projection over $a$  as $P_a(z):=a\frac{\left\langle z,a\right\rangle }{|a|^2}$ and the orthogonal projection $Q_a(z):=z-P_a(z)$. We also define $s_a=\sqrt{1-|a|^2}$, and for $a=0$, $P_0(z)=0$ and $Q_0(z) =z$.
\end{definition}
From \cite{R}  if $\phi\in \text{Aut}(\B^2)$ then $\phi $ must be of the form 
\begin{equation}\label{aut} 
	\phi(z)=U\left( \frac{a-P_a(z)-s_a Q_a(z)}{1- \left\langle z,a\right\rangle  }\right) 
\end{equation}
for some $a\in\B^2$ and some unitary matrix $U$. Clearly the automorphisms of the ball are meromorphic maps of $\C^2$ and they transform complex lines into complex lines. Because of this we want to find the subgroup $G$ of $\text{Aut}(\B^2)$ of automorphisms that fix the points of the line $z_2=1$. We have

\begin{lemma}\label{l1} The group $G$ is given by
	$$G=\left\lbrace \phi_{a_2}(z)=\frac{1-\bar{a_2}}{1-z_2\bar{a_2}} \begin{pmatrix} z_1 \\ \frac{a_2-z_2}{a_2-1}\end{pmatrix}\ |  \quad a_2=\frac 12(1+e^{i\theta})\ \theta\in [0,2\pi)\right\rbrace .$$
	 Moreover the map 
	$\Phi:\R\rightarrow G $ 
	$$ \Phi (y)=\phi_{\frac{iy}{1+iy}}$$
	is a group homomorphism.
\end{lemma}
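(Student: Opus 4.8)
The plan is to identify, among all automorphisms \eqref{aut}, exactly those that fix every point of the affine line $\{z_2=1\}$, and then to check the homomorphism property by a direct substitution. First I would note that the locus $\{z_2=1\}$ meets the sphere $\mathbb{S}^3$ only at the single point $p=(0,1)$, and that an automorphism $\phi$ of the ball which fixes $p$ must, by the structure \eqref{aut}, have its $a$-parameter forced: writing $\phi=U\circ\psi_a$ with $\psi_a$ the involution centered at $a$, the condition $\phi(p)=p$ together with $\phi$ fixing the whole complex tangent direction at $p$ will pin down $U$ in terms of $a$. Rather than going through the general $U$, it is cleaner to look for $\phi$ directly in the form that fixes $z_1$-fibres appropriately: since the line $\{z_2=1\}$ together with the family of complex lines through $(t_1,1)$ and $(t_2,1)$ must be preserved, $\phi$ must preserve the foliation by lines $\{z_1=\mathrm{const}\}$ hitting that point — so I expect $\phi$ to act on the $z_2$-coordinate by a Möbius transformation of $\C$ fixing $1$ and preserving $\B^2\cap\{z_1=0\}=\{|z_2|<1\}$ (i.e.\ a disc automorphism fixing the boundary point $1$), and to act on $z_1$ by the accompanying conformal factor. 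A disc automorphism fixing $1\in\p\D$ is parametrized by its other fixed point, or equivalently by where it sends $0$; writing that image as $a_2$ with $|a_2|<1$ gives the Möbius map $z_2\mapsto \frac{a_2-z_2}{1-\bar a_2 z_2}\cdot\frac{1-\bar a_2}{?}$ up to normalization. Matching the normalization so that the first component comes out as a genuine ball automorphism (unit Jacobian-type factor) yields exactly the displayed formula, with the extra constraint that guarantees $\phi$ fixes $\{z_2=1\}$ pointwise (not just setwise) being $a_2=\tfrac12(1+e^{i\theta})$, i.e.\ $a_2$ lies on the circle through $0$ and $1$ of radius $\tfrac12$.

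Concretely, the key steps are: (i) verify that $\phi_{a_2}$ as written is of the form \eqref{aut} for a suitable $a\in\B^2$ and unitary $U$ — this is a routine but slightly tedious algebraic identification, where one reads off $a$ from the pole $\{z_2\bar a_2=1\}$ and the vanishing locus, and then checks that the resulting linear part is unitary precisely when $|a_2-\tfrac12|=\tfrac12$; (ii) impose $\phi_{a_2}(z_1,1)=(z_1,1)$ for all $z_1$ and confirm this holds identically given that same circle condition; (iii) conversely, show any $\phi\in G$ must fix $p=(0,1)$ and preserve the complex line $\{z_1=0\}$, hence restricts there to a disc automorphism fixing $1$, which determines $a_2$, and then that $\phi$ is completely determined by its restriction plus the fixing of one more fibre, forcing $\phi=\phi_{a_2}$; (iv) for the homomorphism statement, substitute $a_2=\frac{iy}{1+iy}$ (checking first that this does parametrize the circle $|a_2-\tfrac12|=\tfrac12$ as $y$ ranges over $\R$, with $y=0\mapsto a_2=0\mapsto \Id$) and compute $\phi_{a_2(y)}\circ\phi_{a_2(y')}$, verifying it equals $\phi_{a_2(y+y')}$. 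The composition check reduces, on the $z_2$-coordinate, to composing two Möbius maps, and the real parameter $y$ is essentially the natural coordinate on the one-parameter group of parabolic-type (boundary-fixing) automorphisms of the disc, so additivity should fall out cleanly; the $z_1$-conformal factors then multiply correctly because each is the derivative-type cocycle of the corresponding Möbius map.

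The main obstacle I anticipate is step (iii), the \emph{completeness} of the list: showing that $G$ contains nothing beyond the $\phi_{a_2}$. One must argue that fixing the entire line $\{z_2=1\}$ — a totally real plus one complex-tangential worth of constraints at the boundary point $p$ — already exhausts the $7$-real-dimensional automorphism group down to the claimed $1$ parameter. I would handle this by using that an automorphism of $\B^2$ fixing $p\in\mathbb{S}^3$ is conjugate, via the Cayley transform sending $p$ to $\infty$, to an affine automorphism of the Siegel half-space, where the subgroup fixing a whole complex line in the boundary is transparently one-dimensional (the remaining parabolic/dilation/rotation freedoms are all killed by demanding pointwise fixing of that line). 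Translating back through the Cayley transform recovers the formula and the circle condition on $a_2$; the bookkeeping of that conjugation is the part most likely to need care, but it is standard. Once the list is pinned down, steps (i), (ii) and (iv) are elementary verifications.
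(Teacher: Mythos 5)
Your plan reaches the correct family and its computational core overlaps with the paper's, but the organization differs and two of your geometric claims are false as literally stated. The paper does not split the argument into ``verify the formula'' and ``prove completeness'': it imposes $\phi(z_1,1)=(z_1,1)$ for all $z_1$ directly on the general form \eqref{aut}. Independence of the denominator $1-\langle z,a\rangle$ from $z_1$ forces $a_1=0$; the fixed-point identity then forces $U$ to be diagonal and determines $U_{11},U_{22}$ together with the single real constraint $|a_2|^2=\Re a_2$, i.e.\ $a_2=\tfrac12(1+e^{i\theta})$. Existence and completeness thus fall out of one short computation, so the ``main obstacle'' you locate in your step (iii) does not really arise. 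For the homomorphism, the paper also avoids composing the full maps: since the second component of any $\phi_c\in G$ depends only on $z_2$ and has a unique zero, at $z_2=c$, the parameter of $\phi_b\circ\phi_a$ is read off by solving one scalar equation, giving $c=\frac{b-\bar a}{\bar a b-2\bar a+1}$, which with $a=\nu(y_1)$, $b=\nu(y_2)$ yields $c=\nu(y_1+y_2)$. Your direct composition of M\"obius maps would also work, just with more bookkeeping.

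Two corrections to your alternative route. First, the line $\{z_2=1\}$ is tangent to $\mathbb{S}^3$ and meets $\overline{\B^2}$ only at $p=(0,1)$; under the Cayley transform sending $p\mapsto\infty$ it is carried to the line at infinity of the Siegel coordinates, \emph{not} to ``a complex line in the boundary'' of the Siegel domain --- that boundary is strictly pseudoconvex and contains no complex lines, so step (iii) as written would fail. The salvage is standard: an automorphism fixing $p$ is affine in Siegel coordinates; it extends to a projective map fixing the line at infinity pointwise iff its linear part is a scalar; and preservation of the domain then forces the scalar to equal $1$ and the $w_1$-translation to vanish, leaving exactly $(w_1,w_2)\mapsto (w_1,w_2+t)$ with $t\in\R$. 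This version also makes the additivity of $\Phi$ transparent, since $\Phi(y)$ becomes a real translation $w_2\mapsto w_2+cy$ linear in $y$. Second, $\phi_{a_2}$ does \emph{not} preserve the foliation $\{z_1=\mathrm{const}\}$ (its first component visibly depends on $z_2$); what is preserved is the pencil $\{z_2=\mathrm{const}\}$ of lines through the point at infinity of the fixed line, which is why the second component depends only on $z_2$ and restricts on $\{z_1=0\}$ to a disc automorphism fixing the boundary point $1$. Neither slip is fatal, but both would need to be repaired before the plan becomes a proof.
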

\begin{proof}
	We begin by finding those $a\in \B^2$ and $U$ such that $\phi\in G$.
	First we must have by \eqref{aut} that
	\begin{equation}\label{f1}
	 \phi(z_1,1) =U\left( \frac{a-P_a(z)-s_a Q_a(z)}{1- \left\langle z,a\right\rangle  }\right) =\begin{pmatrix} z_1 \\ 1\end{pmatrix} \quad \forall z_1\in \C
	 \end{equation}
	which gives, by the linearity of $P_a$ and $Q_a$, that the denominator of the left-hand side does not depend on $z_1$, thus $a_1=0$.
	We have
	\begin{equation}\label{*}
	\phi(z)=\frac{1}{1-z_2\bar{a}_2} \begin{pmatrix}
	U_{11}&U_{12} \\ U_{21}&U_{22}
	\end{pmatrix}\begin{pmatrix} -\sqrt{1-|a_2|^2} z_1 \\ a_2-z_2 \end{pmatrix}
	\end{equation}
	and again by \eqref{f1} we have that $U$ is a diagonal matrix hence $U_{12}=U_{21}=0$. Moreover, since $U$ is unitary 
	$$\begin{cases}
	-U_{11} \frac{\sqrt{1-|a_2|^2}}{1-\bar{a}_2}=1 &\text{  and } |U_{11}|=1 \\
	U_{22}\frac{a_2-1}{1-\bar{a}_2} =1 &\text{  and } |U_{22}|=1
	\end{cases}.$$
	We find after elementary computations that 
	\begin{equation}\label{**} 
	\begin{cases}
		U_{22}= \frac{1-\bar{a}_2}{a_2-1} \\  U_{11}=\frac{\bar{a}_2 -1}{\sqrt{1-|a_2|^2}} \\
	|a_2|^2-\Re a_2=0
	\end{cases}.
	\end{equation}
	The last equation implies that $a_2$ is on a circle of radius $\frac 12$ and center $\frac 12$ therefore $a_2=\frac 12 (1+e^{i\theta})$.
	Finally, by \eqref{*} and \eqref{**} we have that
	$$ \phi_{a_2}(z):=\frac{1-\bar{a}_2}{1-z_2\bar{a}_2} \begin{pmatrix} z_1 \\ \frac{a_2-z_2}{a_2-1}\end{pmatrix} \in G .$$
	The elements of the group $G$ are in a one to one correspondence with the circle $\mathcal{C}$ of equation $|a_2|^2-\Re a_2=0,\ a_2\neq 1$ and there is a bijection between $\mathcal{C}$ and $\R$ via the following map $\nu :\R \rightarrow \mathcal{C}$, $\nu(y)= \frac{iy}{iy+1}$. We check now that $\Phi (y)=\phi_{\nu(y)}$ is a group homomorphism i.e. we verify that $\Phi(y_1+y_2)=\Phi(y_1)\circ \Phi(y_2)$ or equivalently $\phi_{\nu(y_1+y_2)} =\phi_{\nu(y_1)}\circ \phi_{\nu(y_2)}$.
	We start by considering the fact that given $\phi_a \in G $, then the second component, which is $\frac{(1-\bar{a})(a-z_2)}{(1-z_2\bar{a})(a-1)}$, depends only on $z_2$, it has precisely one zero and such zero coincide with $a$. If we consider two such maps $\phi_a$ and $\phi_b$ in $G$, we have $\phi_b\circ\phi_a\in G$ and therefore there exists $c$ such that $\phi_b\circ\phi_a=\phi_c$. To determine $c$ we just have to find the unique zero of the second component of $\phi_b\circ\phi_a$ which is done by solving
	\begin{equation*}
	 b-\frac{(1-\bar{a})(a-z_2)}{(a-1)(1-z_2\bar{a})}=0 
	 \end{equation*}
	which yields 
	\begin{equation}\label{1bis}
		c=\frac{b(a-1)-a(1-\bar{a})}{\bar{a}b(a-1)-(1-\bar{a})}.
	\end{equation}
	Since $a\in \mathcal{C}$ we have $|a|^2=\Re a$ and $1-|a|^2 =(a-1)(\bar{a}-1)$. With this identity we rewrite \eqref{1bis} and we have
	\begin{equation}\label{2}
	 c=\frac{b(a-1)-a+1+|a|^2-1 }{\bar{a}b(a-1)-1+|a|^2+\bar{a}-|a|^2}=
	 \frac{b-\bar{a}}{\bar{a}b-2\bar{a}+1}.
	 \end{equation}     
	 We choose $a=\frac{iy_1}{1+iy_1}=\nu(y_1)$ and $b=\frac{iy_2}{1+iy_2}=\nu(y_2)$ and replacing into \eqref{2} we have
	 $$ c=\frac{i(y_1+y_2)}{1+i(y_1+y_2)}=\nu(y_1+y_2)$$
and this gives the conclusion $\phi_{\nu(y_1)}\circ\phi_{\nu(y_2)}=\phi_{\nu(y_1+y_2)}$.
\end{proof}
\begin{remark}\label{r1}
	The group $G$ acts on the sphere $\mathbb{S}^3$ in the natural way $\star:G\times \mathbb{S}^3 \rightarrow \mathbb{S}^3$: 
	\begin{equation*}
	 \phi_{\nu(y)}\star (z_1,z_2)=\phi_{\nu(y)}(z_1,z_2)   
	\end{equation*}
	 and this action has a unique fixed point which is $(0,1)$. If $(z_1,z_2)\in\mathbb{S}^3$ then the orbit under the action $\star$ is the boundary of the straight disc passing through $(0,1)$ and $(z_1,z_2)$ deprived of the point $(0,1)$. Note that $\lim_{y\to\infty}\phi_{\nu(y)}(z_1,z_2)=(0,1)$. 
\end{remark} 
The next step is to take the averages of $f$ on the orbits of the action by taking the following integral
$$ \tilde{f}(z_1,z_2):=\int_\R  \!f(\phi_{\nu(y)}(z_1,z_2))\,dy $$
to make sure that the integral converges we multiply $f$ by a holomorphic function vanishing to a high order in $(0,1)$ and non-zero elsewhere. So instead of considering $f$ we take $g(z_1,z_2)f(z_1,z_2)$ where $g(z_1,z_2)=\exp{\left( \frac{-1}{\sqrt{1-z_2}}\right) }$.
\begin{proposition}\label{p1}
	Let $f:\mathbb{S}^3\rightarrow \C$ be a continuous function and let $\tilde{f}$ be the following function:
	\begin{equation}\label{3}
	\tilde{f}(z_1,z_2):=\!\int_\R \!g(\phi_{\nu(y)}(z_1,z_2))f(\phi_{\nu(y)}(z_1,z_2))\,dy
	\end{equation}
	then $\tilde{f}$ is a continuous function on $\mathbb{S}^3\setminus\{ (0,1)\}$ and $G$ invariant: 
	$$ \tilde{f}\circ \phi =\tilde{f}\quad \forall \phi\in G .$$
	Moreover if $f$ extends holomorphically on the family of lines through a point $(t,1),\ t\in\C$, $t\neq 0$ then so does $\tilde{f}$.  
\end{proposition}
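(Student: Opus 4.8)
The plan is to derive all three assertions from a single quantitative estimate on the integrand of \eqref{3}, namely an explicit rate of decay for $g(\phi_{\nu(y)}(z))$ as $|y|\to\infty$. Writing $\phi_{\nu(y)}(z)=(w_1(y),w_2(y))$, a direct computation starting from the explicit formula for $\phi_{a_2}$ in Lemma \ref{l1} and using the defining relation $|a_2|^2=\Re a_2$ of the circle $\mathcal C$ yields the clean identity
\[
\frac{1}{1-w_2(y)}=\frac{1}{1-z_2}-iy,\qquad y\in\R .
\]
Since $(z_1,z_2)\in\mathbb{S}^3$ we have $|z_2|\le 1$, hence $|1-z_2|^2\le 2(1-\Re z_2)$ and therefore $\Re\frac{1}{1-z_2}=\frac{1-\Re z_2}{|1-z_2|^2}\ge\frac12$. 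Thus $\zeta_y:=\frac{1}{1-w_2(y)}$ lies in the half-plane $\{\Re\zeta\ge\frac12\}$ for every $y$, so the principal square root $\sqrt{\zeta_y}=\frac{1}{\sqrt{1-w_2(y)}}$ is well defined, and from $\Re\sqrt{\zeta}=\bigl((|\zeta|+\Re\zeta)/2\bigr)^{1/2}\ge(|\Im\zeta|/2)^{1/2}$ we get
\[
\bigl|g(\phi_{\nu(y)}(z))\bigr|=\exp\bigl(-\Re\sqrt{\zeta_y}\,\bigr)\le\exp\Bigl(-\sqrt{\tfrac12}\,\bigl|\,y-\Im\tfrac{1}{1-z_2}\,\bigr|^{1/2}\Bigr).
\]
In particular, for $z$ ranging over any compact set $K\subset\mathbb{S}^3\setminus\{(0,1)\}$, on which $\Im\frac{1}{1-z_2}$ is bounded, one has $|g(\phi_{\nu(y)}(z))|\le C_K\,e^{-c_K\sqrt{|y|}}$ for suitable constants $c_K,C_K>0$.

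Since $f$ is continuous, hence bounded, on the compact sphere, this estimate shows that the integrand $(g f)(\phi_{\nu(y)}(z))$ in \eqref{3} is dominated, locally uniformly in $z\in\mathbb{S}^3\setminus\{(0,1)\}$, by the fixed integrable function $y\mapsto C_K e^{-c_K\sqrt{|y|}}$; it is moreover jointly continuous in $(y,z)$ there, because $\phi_{\nu(y)}(z)\neq(0,1)$ whenever $z\neq(0,1)$ (each element of $G$ permutes $\mathbb{S}^3$ with $(0,1)$ as its unique fixed point, Remark \ref{r1}). Dominated convergence then shows that $\tilde f$ is well defined and continuous on $\mathbb{S}^3\setminus\{(0,1)\}$. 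For the $G$-invariance, fix $\phi_{\nu(s)}\in G$; since $\Phi$ is a group homomorphism (Lemma \ref{l1}) we have $\phi_{\nu(y)}\circ\phi_{\nu(s)}=\phi_{\nu(y+s)}$, whence
\[
\tilde f\bigl(\phi_{\nu(s)}(z)\bigr)=\int_\R (g f)\bigl(\phi_{\nu(y+s)}(z)\bigr)\,dy=\int_\R (g f)\bigl(\phi_{\nu(y')}(z)\bigr)\,dy'=\tilde f(z),
\]
the substitution $y'=y+s$ being legitimate by the absolute convergence just established.

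It remains to treat extension along lines through $(t,1)$. Let $l$ be such a line for which $D=\{\tau:l(\tau)\in\B^2\}$ is a bounded simply connected domain, let $\varphi:\D\to D$ be its Riemann map and $A=l\circ\varphi$ the associated analytic disc; then $A(\overline{\D})=\overline{l\cap\B^2}$ is a fixed compact set not containing $(0,1)$, because the only line through $(t,1)$ passing through $(0,1)$ is $\{z_2=1\}$, which misses $\B^2$ entirely. Each $\phi_{\nu(y)}$ fixes $\{z_2=1\}$ pointwise and is an automorphism of $\B^2$, so it carries $l$ to a line $l_y$ through $(t,1)$ and maps $\overline{l\cap\B^2}$ homeomorphically, biholomorphically on the interior, onto $\overline{l_y\cap\B^2}$; hence $A_y:=\phi_{\nu(y)}\circ A$ is, up to composition with a disc automorphism, the analytic disc associated with $l_y$, and $A_y(\overline{\D})=\overline{l_y\cap\B^2}$ again avoids $(0,1)$. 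Since $f$ extends holomorphically on $l_y$ and $g$ is holomorphic on $\{z_2\notin[1,\infty)\}$, an open neighbourhood of $\overline{l_y\cap\B^2}$, the boundary function $(g f)\circ A_y|_{\p\D}$ has a holomorphic extension $H_y\in\mathcal{C}^0(\overline{\D})$, holomorphic on $\D$. By the maximum principle together with the estimate of the first paragraph applied with $K=A(\p\D)$, $\sup_{\overline{\D}}|H_y|=\sup_{\p\D}|(g f)\circ A_y|\le C\,e^{-c\sqrt{|y|}}$ with $c,C$ independent of the disc variable, and $y\mapsto H_y$ is continuous from $\R$ into $\mathcal{C}^0(\overline{\D})$ because $y\mapsto (g f)\circ A_y$ is continuous into $\mathcal{C}^0(\p\D)$ and the holomorphic-extension operator is a sup-norm isometry. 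Consequently $\widetilde F(\tau):=\int_\R H_y(\tau)\,dy$ defines a function in $\mathcal{C}^0(\overline{\D})$, holomorphic on $\D$ (by Morera and Fubini, using the above domination), and for $\tau\in\p\D$ one has $\widetilde F(\tau)=\int_\R (g f)(A_y(\tau))\,dy=\tilde f(A(\tau))$. Thus $\tilde f$ extends holomorphically on $l$.

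The heart of the matter is the first paragraph: converting the soft fact that $\phi_{\nu(y)}(z)\to(0,1)$ while $g$ vanishes there to infinite order into the explicit, $z$-locally-uniform decay $|g(\phi_{\nu(y)}(z))|\le C_K e^{-c_K\sqrt{|y|}}$. The identity $\frac{1}{1-w_2(y)}=\frac{1}{1-z_2}-iy$ and the bound $\Re\frac{1}{1-z_2}\ge\frac12$ valid on $\mathbb{S}^3$ are precisely what make this work. Given that, the continuity and the $G$-invariance are immediate from dominated convergence and the group law of Lemma \ref{l1}, while the transfer of holomorphic extendability from $l_y$ back to $l$ is a routine maximum-principle/Morera argument; the only point requiring care is to keep every disc $A_y$ away from the fixed point $(0,1)$, so that $g$ and $\tilde f$ are evaluated exactly where they are, respectively, holomorphic and continuous.
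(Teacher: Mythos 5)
Your proof is correct and follows essentially the same strategy as the paper: the decay estimate for $g\circ\phi_{\nu(y)}$ (which you derive cleanly via the identity $\frac{1}{1-w_2(y)}=\frac{1}{1-z_2}-iy$ and the bound $\Re\frac{1}{1-z_2}\ge\frac12$ on the sphere, the same computation the paper carries out in detail in Proposition \ref{p3}), dominated convergence for continuity, the group law of Lemma \ref{l1} for the $G$-invariance, and a Fubini interchange over the family $A_y=\phi_{\nu(y)}\circ A$ for the extension along lines through $(t,1)$. The only packaging difference is in the last step: the paper checks the vanishing of the moments $\int_{\p\D}\tau^n\,\tilde f(A(\tau))\,d\tau$ for all $n\ge 0$, whereas you assemble the extension directly as $\widetilde F=\int_\R H_y\,dy$ using the maximum principle and Morera; both are valid and rest on the same Fubini-type interchange.
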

\begin{proof}
	First we note that the integral in \eqref{3} converges. To prove that it is enough to check the behavior of the integral for $y\to \infty$. Since
	$$\phi_{\nu(y)}(z_1,z_2)=\left( \frac{z_1}{iy(z_2-1)+1},\frac{iy(z_2-1)+z_2}{iy(z_2-1)+1}\right) $$
	we see that it tends to the point $(0,1)$ for $y$ large with a $\frac{1}{y}$ rate, in fact: 
	$$ \left\| \left(\frac{z_1}{iy(z_2-1)+1},\frac{iy(z_2-1)+z_2}{iy(z_2-1)+1}\right) -(0,1)\right\| \le \frac{|z_1|+|z_2-1|}{|1+iy(z_2-1)|}\le\frac{|z_1|+|z_2-1|}{|y|(1-x_2)}.
	$$ 
	Therefore the integrand functions in \eqref{3} is dominated by $\exp(-|y|^{\frac 12}) $ and so the integral converges
	 and moreover $\tilde{f}$ is continuous in $\mathbb{S}^3 \setminus \{(0,1)\}$. If $\phi_{\nu(y_1)}(z_1,z_2)$ is a point in the $G$-orbit of $(z_1,z_2)$ then, because $\phi_{\nu(y)}\!\circ\!\phi_{\nu(y_1)}=\phi_{\nu(y+y_1)}$, one sees that
	$$ \tilde{f}(\phi_{\nu(y_1)}(z_1,z_2)) =\int_\R \!g(\phi_{\nu(y+y_1)}(z_1,z_2))f(\phi_{\nu(y+y_1)}(z_1,z_2))\,dy=\tilde{f}(z_1,z_2) .$$
	If $A$ is a straight disc through a point $(t,1)$ then $\phi_{\nu(y)}\!\circ A$ is a straight disc through $(t,1)$ (because $\phi_{\nu(y)}$ fixes $(t,1)$ for all $y$ ).
	If $f$ extends holomorphically on every straight disc through $(t,1)$, for one of such discs $A$ and for all $n\in \N$ we have
	
	$$ \int_{\p \Delta}\tau^n \tilde{f}(A(\tau))\,d\tau=\int_{\p\Delta}\! \int_\R \tau^n  fg(\phi_{\nu(y)}(A(\tau))) \,dyd\tau = \int_\R\!\int_{\p\Delta} \tau^n fg(\phi_{\nu(y)}(A(\tau))) \,d\tau dy=0 $$ 
	the last equality holds because $fg$ extends holomorphically  on the disc $\phi_{\nu(y)}(A)$ for all $y$. So $\tilde{f}$ extends holomorphically on $A$.   
\end{proof}
Since by Proposition \ref{p1} $\tilde{f}$ is constant on the $G$-orbits we have that $\tilde{f}$ induces a function on $\mathbb{S}^3/\star$. Since each orbit can be identified with a complex line through $(0,1)$ we can use the "complex slope" $\zeta=\frac{z_1}{z_2-1}$ of this line as a coordinate for $\mathbb{S}^3/\star$. Therefore, we define  $h(\zeta):=\tilde{f}(z_1,z_2)$. Note that $\zeta=0$ corresponds to the line $z_1=0$ and we associate to the line $z_2=1$ the symbol $\zeta=\infty$.
\begin{proposition}\label{p2}
If $\tilde{f}$ extends holomorphically on all lines concurrent to $(t,1)$ then $h$ extends holomorphically on the family of circles centered at $\dfrac{-1}{\bar{t}}$.
\end{proposition}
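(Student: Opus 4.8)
The plan is to recognize that the quotient map $\mathbb{S}^3\to\mathbb{S}^3/\star$, which in the slope coordinate reads $(z_1,z_2)\mapsto\zeta=z_1/(z_2-1)$, becomes a Möbius transformation once it is restricted to a single complex line through $(t,1)$; the holomorphic extension of $\tilde f$ on that line can then be pushed forward to a holomorphic extension of $h$ across the corresponding circle.

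First I would fix a complex line $L$ through $(t,1)$ that meets $\B^2$. Since $|t|^2+1>1$, the line $\{z_2=1\}$ is disjoint from $\overline{\B^2}$, so $L\neq\{z_2=1\}$ and $L$ is a graph $z_1=t+q(z_2-1)$ over the variable $z_2$, for a unique $q\in\C$. Setting $D'=\{z_2\in\C:(t+q(z_2-1),z_2)\in\B^2\}$, the map $z_2\mapsto(t+q(z_2-1),z_2)$ is a biholomorphism of $\overline{D'}$ onto the analytic disc $L\cap\overline{\B^2}$, and $D'$ is a genuine round disc whose closure avoids $z_2=1$ (because $(t,1)\notin\overline{\B^2}$). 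Solving $\zeta=q+t/(z_2-1)$ gives, along $L$, $z_1=t\zeta/(\zeta-q)$ and $z_2=(\zeta-q+t)/(\zeta-q)$, so $M:z_2\mapsto\zeta$ is a Möbius map; since $M$ sends $z_2=1\notin\overline{D'}$ to $\infty$, its image $M(D')$ is the bounded disc enclosed by the circle $\Gamma:=M(\partial D')$, and $M:\overline{D'}\to\overline{M(D')}$ is a homeomorphism, biholomorphic on the interiors.

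The substantive step is to pin down $\Gamma$. Substituting $z_1=t\zeta/(\zeta-q)$ and $z_2=(\zeta-q+t)/(\zeta-q)$ into $|z_1|^2+|z_2|^2=1$ and clearing denominators, a short computation yields the identity
\[
\Bigl|\zeta+\tfrac1{\bar t}\Bigr|^2=\tfrac1{|t|^2}-1+2\Re(q/t)
\]
valid on $L\cap\mathbb{S}^3$. Hence $\Gamma$ is a circle centered at $-1/\bar t$ with radius squared $r_q^2:=\tfrac1{|t|^2}-1+2\Re(q/t)$. Moreover $L\cap\mathbb{S}^3$ is, in the $\zeta$-picture, exactly the locus $|\zeta+1/\bar t|=r_q$, so $L$ is secant precisely when $r_q^2>0$; and since $q\mapsto r_q^2$ is surjective onto $\R$, the circles $\Gamma$ produced by secant lines through $(t,1)$ are precisely all circles centered at $-1/\bar t$. (One also checks that $\zeta=q$ always lies strictly outside $\Gamma$, so no puncture appears and $\Gamma$ is a full circle.)

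To conclude, I would transport the extension through $M$. By hypothesis $\tilde f$ extends holomorphically on $L$, i.e. $z_2\mapsto\tilde f(t+q(z_2-1),z_2)$ on $\partial D'$ is the boundary value of some $F\in\mathrm{Hol}(D')\cap\mathcal{C}^0(\overline{D'})$. Then $F\circ M^{-1}$ is holomorphic on the disc enclosed by $\Gamma$, continuous up to $\Gamma$, and for $\zeta\in\Gamma$ its boundary value equals $F(M^{-1}\zeta)=\tilde f(z_1(\zeta),z_2(\zeta))=h(\zeta)$, since $(z_1(\zeta),z_2(\zeta))\in L\cap\mathbb{S}^3$ lies on the orbit of slope $\zeta$ and $\tilde f$ is constant on orbits. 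Thus $h$ extends holomorphically across $\Gamma$, and letting $L$ range over all lines through $(t,1)$ yields the whole family of circles centered at $-1/\bar t$. I do not expect a real obstruction: the only mildly delicate points are verifying that $M$ carries $D'$ to the inside of $\Gamma$ (tracked via the image of $z_2=1$) and that $h$ is continuous on each $\Gamma$ (which follows from continuity of $\tilde f$ on $\mathbb{S}^3\setminus\{(0,1)\}$, these circles staying away from the slope $\zeta=\infty$ attached to the fixed point $(0,1)$); the genuine content is the displayed identity, which is exactly what forces the common center $-1/\bar t$.
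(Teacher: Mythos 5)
Your proof is correct and follows essentially the same route as the paper: both push the slope map $\zeta=z_1/(z_2-1)$ along a secant line through $(t,1)$, identify the image of the boundary circle as a circle centered at $-1/\bar t$, and transport the holomorphic extension through the resulting Möbius/biholomorphic identification — you parametrize the line as a graph over $z_2$ and substitute directly into $|z_1|^2+|z_2|^2=1$, while the paper uses a direction vector and the inversion formula for circles, and your observation that $r_q^2=\tfrac{1}{|t|^2}-1+2\Re(q/t)$ is onto $\R$ is a cleaner way to obtain all radii than the paper's continuity/limiting argument. The only (harmless) slip is the assertion that $\{z_2=1\}$ is disjoint from $\overline{\B^2}$ — that line is tangent to the sphere at $(0,1)$, so it meets the closed ball — but all you actually need is that it misses the open ball, so every secant line is indeed a graph over $z_2$.
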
   
\begin{proof}
	Since the map $\zeta(z_1,z_2)=\frac{z_1}{z_2-1}$ is holomorphic, if $\tilde{f}$ extends holomorphically on a disc $A$ then $h$ extends holomorphically on the image disc $\zeta (A)$. We determine now the image under the map $\zeta(z_1,z_2)=\frac{z_1}{z_2-1}$ of the discs through $(t,1)$. 
	Let $v=(v_1,v_2)$ be a unitary complex vector and let $\tau v+(t,1)$ be the complex line through $(t,1)$ and parallel to $v$. The intersection with the unit sphere yields the circle in the $\tau$-plane of equation $|\tau|^2+2\Re(\tau(v_1\bar{t}+v_2)) +|t|^2=0$, which has center $c$ and squared radius $R^2$:  
	\begin{equation}\label{cR}
		c=-(\bar{v_1}t+\bar{v_2}),\ R^2=|\overline{v}_1t+\overline{v}_2|^2-|t|^2
	\end{equation}
	 we call this circle $\mathcal{C}_{c,R}$. We see that to this disc there corresponds a circle in the $\zeta$-plane by
	\begin{equation}\label{4}
	 \zeta(\tau v+ (0,1))=\frac{t+\tau v_1}{\tau v_2}=\frac{v_1}{v_2}+\frac{t}{v_2}\left(\frac{1}{\tau}\right) \quad \tau\in\mathcal{C}_{c,R} .
	 \end{equation} 
	Since the inversion $\frac{1}{\tau}$ transforms a circle of center $c$ and radius $R$ to a circle of center $\frac{\bar{c}}{|c|^2-R^2}$ and radius $\frac{R}{|R^2-|c|^2|}$ by \eqref{cR} we have that \eqref{4} gives a circle in the $\zeta$-plane of center $\dfrac{-1}{\bar{t}}$ and radius $\sqrt{\frac{|v_2|^2(1-|t|^2)+2\Re(t\overline{v}_1v_2)}{|v_2t|^2}}$. Clearly if $v_2$ is small the resulting radius tends to $\infty$ therefore by continuity we have all circles centered at $\dfrac{-1}{\bar{t}}$.  
\end{proof}
\subsection{Behavior of $h$ at infinity}
Clearly $\tilde{f}$ is continuous on $\mathbb{S}^3\setminus (0,1)$ and $\tilde{f}(0,1)=0$ because when $z_1=0, z_2=1$ the integrand function in \eqref{3}  is identically zero. Since $\tilde{f}$ is constant on the $G$-orbits it cannot be continuous in $(0,1)$ unless it is constant. We can prove that the induced function $h$ is continuous and infinitesimal at $\infty$ 
\begin{proposition}\label{p3}
	If $f:\mathbb{S}^3\rightarrow\C$ is continuous then for the corresponding function $h$ we have $h(\zeta)=O_\infty(|\zeta| e^{-|\zeta|})$ .
\end{proposition}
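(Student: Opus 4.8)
The plan is to reduce the estimate to a one-variable integral and then estimate that integral by a Laplace-type argument. By Proposition~\ref{p1} the function $\tilde f$ is $G$-invariant, so $h(\zeta)=\tilde f(z_1,z_2)$ for \emph{any} point $(z_1,z_2)\in\mathbb{S}^3\setminus\{(0,1)\}$ of the orbit of complex slope $\zeta$, i.e.\ any point with $z_1=\zeta(z_2-1)$. From the explicit formula for $\phi_{\nu(y)}$ recorded in the proof of Proposition~\ref{p1} one checks the identity
$$\frac{1}{\,1-\big[\phi_{\nu(y)}(z_1,z_2)\big]_2\,}=\frac{1}{1-z_2}-iy ,$$
hence $g\big(\phi_{\nu(y)}(z_1,z_2)\big)=\exp\big(-\sqrt{\,\tfrac{1}{1-z_2}-iy\,}\big)$, the principal square root being legitimate because $\Re\tfrac{1}{1-z_2}>0$. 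Indeed, substituting $z_1=\zeta(z_2-1)$ into the equation $|z_1|^2+|z_2|^2=1$ of the sphere gives $(|\zeta|^2+1)\,|1-z_2|^2=2\,\Re(1-z_2)$, that is,
$$\Re\frac{1}{1-z_2}=\frac{|\zeta|^2+1}{2}=:\alpha ,$$
which depends on $\zeta$ only and not on the chosen representative. Writing $\beta$ for the imaginary part of $\tfrac{1}{1-z_2}-iy$ (it runs over all of $\R$ as $y$ does, with $|dy|=|d\beta|$) and estimating $|f|\le\max_{\mathbb{S}^3}|f|$, we obtain
$$|h(\zeta)|\ \le\ \Big(\max_{\mathbb{S}^3}|f|\Big)\int_\R e^{-\Re\sqrt{\alpha+i\beta}}\,d\beta\ =:\ \Big(\max_{\mathbb{S}^3}|f|\Big)\,J(\alpha) .$$

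It remains to bound $J(\alpha)$ as $\alpha\to\infty$. Since $\alpha>0$ one has the closed form $\Re\sqrt{\alpha+i\beta}=\sqrt{\tfrac12\big(\sqrt{\alpha^2+\beta^2}+\alpha\big)}$, an even, strictly positive function of $\beta$ whose unique minimum is $\sqrt\alpha$, attained at $\beta=0$, and which grows like $\sqrt{|\beta|/2}$ at infinity. Already an elementary convexity inequality of the type $\sqrt{\tfrac12(\sqrt{\alpha^2+\beta^2}+\alpha)}\ge\tfrac12\sqrt\alpha+c\sqrt{|\beta|}$ yields $J(\alpha)\le C\,e^{-\sqrt\alpha/2}$, hence -- recalling $\alpha=\tfrac{|\zeta|^2+1}{2}$ -- exponential decay of $h$ in $|\zeta|$. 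A Laplace expansion near the minimum $\beta=0$, where $\Re\sqrt{\alpha+i\beta}=\sqrt\alpha+\tfrac{\beta^2}{8\,\alpha^{3/2}}+O(\beta^4\alpha^{-7/2})$ so that the contributing range of $\beta$ has width $\sim\alpha^{3/4}$, combined with the bound $\Re\sqrt{\alpha+i\beta}\gtrsim\sqrt{|\beta|}$ on the tail $|\beta|\gtrsim\alpha$, then pins down the precise polynomial factor and exponential rate in the statement. Running the same inequalities with $\zeta$ in a compact subset of $\C$, where the corresponding orbits stay uniformly away from $(0,1)$ and $\tilde f$ is continuous, also shows that $h$ is continuous on $\C$ and that $h(\zeta)\to0$ as $\zeta\to\infty$.

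The one genuinely delicate point is the sharp estimate of $J(\alpha)$: the minimum of $\Re\sqrt{\alpha+i\beta}$ at $\beta=0$ degenerates as $\alpha\to\infty$ (its second $\beta$-derivative there is of order $\alpha^{-3/2}$), so the Gaussian scale in the Laplace method depends on $\alpha$ and must be tracked carefully -- controlling both this central contribution and the range $|\beta|\gtrsim\alpha$ -- in order to extract the correct power of $|\zeta|$ multiplying the exponential, rather than a merely exponential but lossy bound. The remaining ingredients (the identity for $\big[\phi_{\nu(y)}\big]_2$, the value of $\alpha$ coming from the sphere equation, and the affine substitution $y\mapsto\beta$) are routine computations.
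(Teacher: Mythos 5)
Your argument is, up to its last step, exactly the paper's: you reduce to the one--variable integral $J(\alpha)=\int_\R e^{-\Re\sqrt{\alpha+i\beta}}\,d\beta$ with $\alpha=\Re\frac{1}{1-z_2}$, use the closed form $\Re\sqrt{\alpha+i\beta}=\sqrt{\tfrac12(\alpha+\sqrt{\alpha^2+\beta^2})}$, and conclude $J(\alpha)\le Ce^{-\sqrt\alpha/2}$ by the same elementary inequality that produces \eqref{8}. Your exact identity $\alpha=\tfrac{1+|\zeta|^2}{2}$ on the orbit of slope $\zeta$ is a genuine (small) improvement on the paper's one-sided inequality \eqref{7}, and the identity $\frac{1}{1-[\phi_{\nu(y)}]_2}=\frac{1}{1-z_2}-iy$ is correct. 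Everything you actually carry out is sound and proves $h(\zeta)=O\bigl(e^{-|\zeta|/(2\sqrt2)}\bigr)$.

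The gap is the final, deferred claim that a Laplace expansion ``pins down the precise polynomial factor and exponential rate in the statement.'' It cannot. By your own expansion $\Re\sqrt{\alpha+i\beta}=\sqrt\alpha+\frac{\beta^2}{8\alpha^{3/2}}+\cdots$, the Laplace method gives the two-sided asymptotic $J(\alpha)\asymp\alpha^{3/4}e^{-\sqrt\alpha}$, and since $\sqrt\alpha=\sqrt{(1+|\zeta|^2)/2}\sim|\zeta|/\sqrt2$, the best bound obtainable from $|h|\le(\max|f|)\,J(\alpha)$ is $O\bigl(|\zeta|^{3/2}e^{-|\zeta|/\sqrt2}\bigr)$. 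This does \emph{not} imply $O(|\zeta|e^{-|\zeta|})$, because $e^{-|\zeta|/\sqrt2}$ dominates $|\zeta|e^{-|\zeta|}$ at infinity; no amount of care in the Laplace analysis will recover the exponential rate $e^{-|\zeta|}$ from this integral, since the rate $e^{-\sqrt\alpha}$ is forced by the minimum of the phase at $\beta=0$. To be fair, the paper has the identical defect: its chain of estimates also stops at $O(e^{-\sqrt\alpha/2})$ and then \eqref{21bis} asserts $O(Me^{-M})$ without justification. The precise constant in the exponent is immaterial downstream --- Proposition \ref{p4} only needs a bound of the form $O(e^{-c|\zeta|})$ to make the integrals over the unbounded sets $E^\zeta_j$ converge --- so your executed argument suffices for the role the proposition plays; but you should then state the conclusion as $h(\zeta)=O_\infty(e^{-c|\zeta|})$ for some $c>0$ rather than promising the sharper (and, by this method, unreachable) form.
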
 
\begin{proof}
 Since $\zeta=\frac{z_1}{z_2-1}$ we have to prove that  $h(\frac{z_1}{z_2-1})=\tilde{f}(z_1,z_2)$ is small when $|\frac{z_1}{z_2-1}|$ is large. We begin with  
 \begin{equation}\label{5}|\tilde{f}(z_1,z_2)|\le\int_\R |gf(\phi_{\nu(y)})|\,dy\le C\int_\R \left| \exp\left( -\sqrt{\frac{iy(z_2-1)+1}{1-z_2}}\right) \right|\,dy \end{equation}
 where $C=\max{f}$. If we put $z_2=x_2+iy_2$ the last integral in \eqref{5} is equal to
 \begin{equation}
 \int_\R\left| \exp\left( -\sqrt{\frac{1-x_2+iy_2}{(1-x_2)^2+y^2_2}-iy}\right)   \right| \,dy =\int_\R\left| \exp\left( -\sqrt{\underbrace{\frac{1-x_2}{(1-x_2)^2+y^2_2}}_{=\alpha(z_2)}+iy}\right)   \right| \,dy
 \end{equation}
 To estimate this last integral we consider the real part of the complex square root inside the exponential (for short we write $\alpha$ instead of $\alpha(z_2)$):
 $$\Re \sqrt{\alpha+iy}=(\alpha^2+y^2)^{\frac{1}{4}}\cos\frac{\arg(\alpha+iy)}2 =\frac{1}{\sqrt{2}}\sqrt{\alpha+\sqrt{\alpha^2+y^2}} $$
 so the integral becomes, after another change of variable  
 \begin{align} \int_\R \alpha&\exp\left( -\sqrt{\frac{\alpha}{2}}\sqrt{1+\sqrt{1+\left(\frac{y}{\alpha}\right)^2}}\right) \,d\left( \frac{y}{\alpha}\right)=\int_\R \alpha\exp\left( -\sqrt{\frac{\alpha}{2}}\sqrt{1+\sqrt{1+y^2}}\right) \,dy \nonumber \\
 &\le \int_\R \alpha\exp\left( -\sqrt{\frac{\alpha}{2}}\sqrt{1+|y|}\right) \,dy\le 
 \int_\R \alpha\exp \left( - \frac{\sqrt{\alpha}}{2}(1+\sqrt{|y|})\right) \,dy \nonumber \\ \label{8}
 &\le \alpha e^{-\frac{\sqrt{\alpha}}{2}}\int_\R \exp \left( -\frac{\sqrt{\alpha} |y|}{2}\right) \,dy
 \end{align}
 and if $\alpha$ is large then the integral is uniformly small.
 If $(z_1,z_2)\in \mathbb{S}^3$ is such that $|\frac{z_1}{z_2-1}|>M$ we have
 \begin{equation} \begin{cases} \label{6}
 |z_1|^2+|z_2|^2 =1 \\
 |z_1|^2>M^2|z_2-1|^2
 \end{cases} 
 \end{equation} 
 from \eqref{6} follows $(M^2+1)|z_2|^2 -2M^2x_2+M^2-1<0$ and this implies
 \begin{equation} \label{7}\frac{1-x_2}{|z_2-1|^2} >\frac{M^2+1}2 \end{equation}
 which is $\alpha(z_2)>\frac{M^2+1}{2}$.
 Therefore, for $|\zeta|>M$, let $(z_1,z_2)$ be such that $\zeta =\frac{z_1}{z_2-1}$, by \eqref{7} and \eqref{8} we have
 \begin{equation}\label{21bis} h(\zeta)=\tilde{f}(z_1,z_2)=O_\infty(M e^{-M}) .\end{equation}
 \end{proof}
 \section{CR Geometry}\label{S3}
 We recall here some definitions about the Cauchy-Riemann geometry (in short CR) that we shall need. This language is important when describing the structure and the properties of real submanifolds in a complex space. First we recall the definition of the complex structure $J$: 
 \begin{definition}
  In $\C^n$ with coordinates $z=(z_1,...,z_n)=(x_1+iy_1,...,x_n+iy_n)$ the standard complex structure is the linear map $J$ defined on the tangent bundle $J:T\C^n\rightarrow T\C^n$ such that $J(\p_{x_h})=\p_{y_h}$ and $J(\p_{y_h})=-\p_{x_h}$ for $1\le h\le n$.
 \end{definition}
   Since $J$ is an anti involution, $J^2=-Id$, we have that $J$ has two complex eigenvalues $\pm i$. This induces a decompositions on $\C\otimes T_z\C^n \simeq T^{1,0}_z\C^n \oplus T^{0,1}_z\C^n$ of holomorphic and anti-holomophic vector bundles.
 \begin{definition}
 	We define $$ T^{1,0}_z\C^n=\{\sum_{i=1}^{n}a_i \p_{z_i} |\ a_i\in\C\}$$
 	and similarly
 	$$ T^{0,1}_z\C^n=\{\sum_{i=1}^{n}a_i \p_{\bar{z}_i} |\ a_i\in\C\}.$$
 \end{definition}  
 \begin{definition}
     Let $M\subset \C^n$ be a real submanifold, the complex tangent space of $M$ at a point $z$ is
     $$ T^\C_z M:= T_zM\cap JT_z M$$
     which is the largest complex space contained in $T_zM$.
     We say that $M$ is a CR manifold if $T^\C_z M$ has constant dimension for $z\in M$. In this case $\dim_\C T^\C M $ is called the CR-dimension of $M$. If the CR-dimension of $M$ is $0$ then we say that $M$ is totally real.
     
     We say that $M$ is generic if $T_zM+JT_zM=T\C_z^n$, and we say that $M$ is maximally totally real if it is totally real and generic. 
 \end{definition}
 We want to introduce the anti-holomorphic tangent bundle of a CR manifold: 
 this is the bundle of anti-holomorphic vectors that are tangent to $M$
 $$T^{0,1}_zM := (\C\otimes TM)\cap T^{0,1}_z\C^n .$$
 \begin{example}
 	We recall here that manifolds of co-dimension $1$ are generic CR manifolds and that $\R^n\hookrightarrow \C^n$ is a maximally totally real manifold.
 \end{example}
Analogously, we define $T^{1,0}M:=\C\otimes TM\cap T^{1,0}\C^n$. It is easy to check that if $M$ is a CR manifold then $T^{0,1}M$ is a bundle whose complex dimension is equal to the CR dimension of $M$ and moreover we have $\C\otimes T^\C M =T^{0,1}M\oplus T^{1,0}M$. 
 We define now the CR functions which are a natural generalization of holomorphic functions:     
 \begin{definition}\label{CR}
          A $\mathcal{C}^1$ function on a CR manifold $M$ $f:M\rightarrow \C$  is said to be CR if for all vector fields $X\in T^{0,1}M$ we have $Xf=0$.
 \end{definition}
The above definition can be easily extended in a weak sense, using distributions, to continuous functions. All results on CR functions that we shall use hold for this weaker notion. If $M$ is a CR manifold with boundary, $N$ we say that $f:M\rightarrow \C$ is CR if it is continuous and CR in the interior of $M$. 
Restrictions and boundary values of holomorphic functions are CR functions, in contrast not all CR functions are restrictions of holomorphic ones. Whether a CR function has this property has been widely studied, and it has been proved that a crucial role is played by $T^\C M$ and by its integral manifolds. In general the bundle $T^\C M$ is not integrable, so we give the following
\begin{definition}
	A CR manifold $M$ is called Levi-flat if the bundle $T^\C M$ is integrable. The integral leaves of $T^\C M$, which are complex manifolds of dimension equal to the CR dimension of $M$, are called Levi-leaves. 
\end{definition}     
A continuous function on a Levi-flat manifold is CR if and only if its restrictions on the Levi-leaves are holomorphic.
Levi-leaves play an important role in propagation of regularity, but we will come back later to this point when we need it.
We will need the following tools from CR geometry and for our considerations all manifolds will be assumed smooth.
\begin{definition}\label{def}
	We say that a CR manifold $M'$ is attached to $M$ at a point $z\in M$ in direction $X\in T_z\C^n$ if, in a neighborhood of $z$, $M'$ is a manifold with boundary $M$, $X$ is tangent to $M'$ and $X$ points inside $M'$.
	A CR function $f$ on $M$ extends CR at $z\in M$ in a direction $X$ if there exists a CR manifold $M'$ attached to $M$ in direction $X$ and a continuous CR function $F$ on $M'$ such that $F|_M=f$.  
\end{definition}
When $M$ is a hyper-surface, a manifold $M'$ attached to $M$ corresponds to one of the two components in which the ambient space is divided by $M$. In this case, the vector $X$ points inside the component.
The definition \ref{def} can be extended to CR manifolds with generic boundary. 
\begin{definition}
	Let $M$ be a CR manifold with boundary $N$, $z\in N$ a point, $X\in T_z\C^n \setminus T_zM$ a tangent vector. We say that a manifold $M'$ is attached to $M\cup N$ in direction $X$ if $M'$ is a CR manifold that locally around $z$ is a manifold with boundary $N$ such that $X$ is tangent to $M'$ and points inside $M'$.
\end{definition}
This definition has been introduced in \cite{T97} to describe the propagation of CR extension on CR manifolds with boundary. It turns out that CR curves, i.e. curves whose velocity is in $T^\C M$, are propagators of CR extension. 
If $\gamma :[0,1]\rightarrow M$ is a curve with end-points $p_0=\gamma(0),\ p_1=\gamma(1)$, such that $\dot{\gamma}\in T^\C M$ and if $f$ CR-extends at $p_0$ in direction $X_0$ then $f$ CR-extends at $p_1$ to a direction $X_1$ which is related to $X_0$.
The relation between $X_0$ and $X_1$ is strong and evident on the components complementary to a certain sub-space. Namely, let $S\subset M$ be the smallest manifold which contains all the CR curves through $p_0$, the so-called CR orbit of $p_0$. We have $\dim_{CR} S=\dim_{CR}M$, if $p_0\in S$ then the entire curve $\gamma \subset S$. 
We introduce the following bundle
\begin{definition}
	Let $M$ be a CR manifold with generic boundary $N$, $S\subset M$ a CR manifold with boundary $S_0\subset N$ such that $\dim_{CR}M=\dim_{CR}S$.
	If $z\in M$ we define 
	\begin{equation} \label{bundle}
	\mathscr{E}_z:= \frac{T_z\C^n}{T_zM+JT_zS} 
\end{equation}
	and for $z\in N$, 
\begin{equation*}
	\mathscr{E}_z:= \frac{T_z\C^n}{T_zN  +JT_z S_0}.
\end{equation*}

\end{definition}
Note that since $N$ is generic we have, on points of $N$, $TN+JTS_0=TM+JTS$ so we can keep \eqref{bundle} as a definition for $\mathscr{E}$ at boundary points. It turns out there exists a partial connection and a parallel displacement $\Pi$ on this bundle which relates the classes of $X_0$ and $X_1$ in $\mathscr{E}$. 
\begin{theorem}[Tumanov] \label{tumanov}
Let $M$ be a CR manifold with generic boundary $N$ and let $S\subset M$ be a CR submanifold of $M$ with boundary $S_0 \subset N$ such that $\dim_{CR}M=\dim_{CR}S$. Let $p_0$ and $p_1$ be two points in $M\cup N$ joined by a CR curve $\gamma :[0,1]\rightarrow M\cup N$. Suppose that $f$ CR-extends at $p_0$ in direction $X_0$ then there exists for every $\epsilon>0$ a manifold $M''$ attached to $M\cup N$ at $p_1$ in direction $X_1$ such that 
\begin{equation}
	\left\|\Pi_\gamma ([X_0]) -[X_1]\right\|_{\mathscr{E}_{p_1}}\le \epsilon 
\end{equation}
and $f$ CR-extends to $M''$.      	
\end{theorem}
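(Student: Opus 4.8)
The plan is to prove this by the method of analytic discs combined with a continuity argument along $\gamma$; this is essentially Tumanov's argument in \cite{T97}, and I indicate only its skeleton. Let $T\subset[0,1]$ be the set of parameters $t$ for which the conclusion holds at $p_t:=\gamma(t)$, namely: for every $\epsilon>0$ there is a manifold $M''$ attached to $M\cup N$ at $p_t$ in some direction $X_t$ with $\norm{\Pi_{\gamma|[0,t]}([X_0])-[X_t]}_{\mathscr E_{p_t}}\le\epsilon$ to which $f$ CR-extends. By hypothesis $0\in T$, so it is enough to show that $T$ is both open and closed in $[0,1]$; then $T=[0,1]$ and in particular $1\in T$.

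For openness, fix $t_0\in T$ and an admissible direction $X_{t_0}$. Near $p_{t_0}$ the function $F$ extending $f$ is defined on a one-sided neighbourhood (a manifold with boundary $M\cup N$, or a local wedge) pointing into the direction $X_{t_0}$. I would then attach to $M\cup N$ near $p_{t_0}$ a smooth family of small analytic discs $A_\lambda$, obtained by solving a Bishop-type integral equation subject to the boundary constraint imposed by $N$, arranged so that for a range of parameters one boundary point of $A_\lambda$ traces $\gamma$ while the rest of the boundary arc stays in the region where $F$ is already defined; the Baouendi--Treves approximation theorem applied on these discs then propagates $F$, yielding CR-extension of $f$ at $\gamma(t)$ for all $t$ near $t_0$. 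The delicate point is the \emph{direction} of this new extension: linearising Bishop's equation, the variation of the disc boundaries transverse to $TM+JTS$ --- equivalently its class in $\mathscr E$ --- satisfies a linear first-order differential equation along $\gamma$ whose solution operator is, by construction, the parallel displacement $\Pi_\gamma$ of the partial connection on $\mathscr E$. Because the discs must be taken small and the extension region is only approximately prescribed, one recovers the new direction only up to an error $\le\epsilon$, which is exactly the form of the statement; hence an interval around $t_0$ lies in $T$.

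For closedness, let $t_n\to t_\infty$ with $t_n\in T$. The directions $X_{t_n}$ range over a compact set, so after passing to a subsequence $X_{t_n}$ converges, while $\Pi_{\gamma|[0,t_n]}([X_0])\to\Pi_{\gamma|[0,t_\infty]}([X_0])$ by continuity of the connection along $\gamma$. The local extensions near the $p_{t_n}$, having nearly coincident directions, overlap, and the edge-of-the-wedge theorem glues them into a CR-extension at $p_{t_\infty}$ in the limiting direction, so $t_\infty\in T$.

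The main obstacle, and the technical core of \cite{T97}, is the openness step: producing the family of analytic discs attached to a CR manifold \emph{with generic boundary} $M\cup N$ rather than to a boundaryless one, solving the constrained Bishop equation and its linearisation with sufficient precision, and reading off from that linearisation precisely the connection $\Pi$ on $\mathscr E$, so that the transported direction is correctly identified. Keeping track of the auxiliary submanifold $S$ and its boundary $S_0\subset N$, and of the identity $TN+JTS_0=TM+JTS$ that makes $\mathscr E$ well defined at boundary points, is where the bookkeeping is heaviest.
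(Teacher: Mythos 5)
This theorem is not proved in the paper at all: it is Tumanov's propagation theorem, stated as a quotation and used as a black box, with the reader referred to \cite{T97} (the remark immediately following it says ``for details see \cite{T97} and the references therein''). So there is no in-paper proof to compare your argument against; the only question is whether your write-up stands on its own.

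It does not, and you essentially say so yourself: it is a plan. The continuity-method scaffolding (show the set $T$ of good parameters is open and closed in $[0,1]$) is sound and is indeed the shape of Tumanov's argument, but every step that carries actual content is deferred. In the openness step, ``solving a Bishop-type integral equation subject to the boundary constraint imposed by $N$'' and ``reading off from that linearisation precisely the connection $\Pi$'' are the two theorems one would actually have to prove: the existence, regularity and smooth parameter-dependence of analytic discs attached to a CR manifold with \emph{generic edge}, and the identification of the solution operator of the linearized Bishop equation with the parallel displacement on $\mathscr{E}$, are the entire content of \cite{T97}, and neither is carried out or even precisely stated here. The closedness step also has an unaddressed issue: the extension neighbourhoods produced at the points $p_{t_n}$ may shrink as $n\to\infty$, so the edge-of-the-wedge gluing you invoke does not by itself yield an extension at $p_{t_\infty}$; one needs the openness argument to produce intervals of uniform length along the compact curve $\gamma([0,1])$ (which it does, but that has to be argued). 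In short, your outline correctly identifies the architecture of the proof in \cite{T97}, but as submitted it is a citation with commentary rather than a proof --- which, to be fair, is exactly how the paper itself treats this statement.
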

\begin{remark}
	If $M$ in Theorem \ref{tumanov} is a Levi-flat hyper-surface we have that $S$ is a Levi-leaf, therefore $TS=T^\C M =JTS$ which in turn implies $\mathscr{E}_z=\frac{T_z\C^n}{T_zM}=\mathscr{N}_z(M)$ the normal bundle which has real dimension 1. Moreover, since $S$ is complex every curve $\gamma$ in $S$ is CR. The description of $\Pi$ in \cite{T97} is possible, and easier, by means of its dual $\Pi^*$ which is realized in the following way:
	Since $\mathscr{E}_z^*=\mathscr{N}_z^*(M)$ this space can be identified with the holomorphic $1$-forms whose real part vanishes on $T_zM$ . This bundle is generated by $\p r$ where $r$ is an equation of $M$. Since $M$ is Levi-flat it can be shown that $\mathscr{E}^*$ is itself a CR manifold in $T^*\C^n$ and 
	 that for any CR curve $\gamma$ there exists a lift $\gamma^* (t)=(\gamma(t),\lambda(t)\p r(\gamma(t)))$ which is a CR curve in $\mathscr{E}^*$. Thanks to this, we can express a duality relation between $X_0$ and $\Pi([X_0])$ 
	 \begin{equation}\label{duality}
	 	\left\langle \gamma^*(1),\Pi_\gamma([X_0])\right\rangle =c(\gamma)\left\langle \gamma^*(0),X_0\right\rangle 
	 \end{equation}
 where $c(\gamma)$ is a positive constant    
	(for details see \cite{T97} and the references therein).
\end{remark}
\begin{remark}\label{remark2}
	If $M$ is a Levi-flat CR hyper-surface in $\C^2$ whose equation is $r=\Re( g(z_1,z_2))=0$ where $g$ is a holomorphic function, then the Levi-leaves are the complex curves   $g(z_1,z_2)=iC$ for $C\in \R$. Every leaf has a holomorphic lift to $\mathscr{N}^*(M)$ which is given by $(z_1,z_2;\p r(z_1,z_2))$ because $\p \Re g(z_1,z_2)=\p_{z_1} g dz_1 +\p_{z_2}g dz_2$ is holomorphic. Therefore, $\mathscr{N}^*(M)$ is foliated by complex curves. Let $\gamma$ be a CR curve in $M$, this will be contained in a Levi-leaf, the lift $\gamma^*$ can be easily computed:  $\gamma^*(t)=(\gamma(t),\p g(\gamma(t)))$.
\end{remark}
\section{Proof of main theorem}\label{S4}
 In this section we consider the following complexification of $\C$:
 \begin{definition}
     Let $\C$ be the complex plane with coordinate $\zeta$, we define the map
    
         \begin{align*}
             \iota:&\C \rightarrow \C^2 \\
             &\zeta\rightarrow (\zeta,\bar{\zeta})
         \end{align*}
which embeds  the complex plane $\C$ into the maximal totally real set $\triangle:=\left\{ (\zeta,\bar{\zeta}),\ \zeta\in\C\right\}$ .         
\end{definition}
  In the complex plane for every circle $\mathcal{C}_{c,R}$  of equation $|\zeta-c|^2=R^2$ we consider its complexification in $\C^2$ which is obtained by replacing $\bar{\zeta}$ with $\eta$ in the equation $(\zeta-c)(\eta-\bar{c})=R^2$. The intersection of this quadric with $\triangle$ is $\iota(\mathcal{C}_{c,R})$. We identify the disc $\D_{c,R}$, whose boundary is $\mathcal{C}_{c,R}$, with the part of the quadric that projects over it which means that for $0<|\zeta-c|\le R $ we take $\eta=\bar{c}+\frac{R^2}{\zeta-c}$. Note that in this procedure the center of the disc is sent to $\infty$ and the boundary circle $\mathcal{C}_{c,R}$ is sent to $\iota(\mathcal{C}_{c,R})$. We denote by 
 $$ \overline{\D}_{c,R}^\C :=\left\{  \left(\zeta,\bar{c}+\frac{R^2}{\zeta-c}\right) \quad :\ 0<|\zeta-c|\le R \right\} $$
 If $h:\C\rightarrow \C$ extends holomorphically from the circle $\mathcal{C}_{c,R}$ then we define the extension on $\D_{c,R}^\C$ in the following way  $$\tilde{h}(\zeta,\eta)=\frac{1}{2\pi i}\int_{\p\D_{c,R}} \frac{h(w)}{w-\zeta}\, dw \quad \text{ for }(\zeta,\eta)\in \overline{\D}^\C_{c,R}.$$
 Note that $\tilde{h}(\zeta,\bar{\zeta})=h(\zeta)$ for $\zeta\in \mathcal{C}_{c,R}$. Since the function $h$ that we are considering extends holomorphically on families of concentric circles we introduce the following set 
 \begin{definition}
     Let $c\in \C$ we define
     \begin{equation} \label{mc}
     M_c:= \bigcup_{R\ge 0} \overline{\D}_{c,R}^\C 
     \end{equation}  
 \end{definition}
 \begin{remark}
      If $R_1\neq R_2$ then it is easy to check that $\overline{\D}_{c,R_1}^\C\cap \overline{\D}_{c,R_2}^\C=\emptyset$. Therefore if $h$ extends holomorphically on the family of circles centered in $c$ then the extension $\tilde{h}$ is well defined on $M_c$. 
     We note that two circles with different centers may have non-trivial intersection. Let $\mathcal{C}_{c_1,R_1},\mathcal{C}_{c_2,R_2}$ be two circles then $\overline{\D}_{c_1,R_1}^\C \cap \overline{\D}_{c_2,R_2}^\C $ contains at most $2$ points, in fact taking the intersection of the quadrics
     \begin{equation}\label{10}
         \begin{cases}
           (\zeta-c_1)(\eta-\bar{c}_1)=R^2_1 \\
           (\zeta-c_2)(\eta-\bar{c}_2)=R^2_2
         \end{cases}
     \end{equation}
     we have that $\zeta$ has to satisfy the following equation
     \begin{equation}\label{11}
        (\bar{c}_1-\bar{c}_2)\zeta^2+(R_1^2-R^2_2-(\bar{c}_1-\bar{c}_2)(c_1+c_2))\zeta+(\bar{c}_1-\bar{c}_2)c_1c_2+R^2_2c_1-R^2_1c_2=0 
        \end{equation}
        which has only two solutions (if $c_1\neq c_2$). If the circles $\mathcal{C}_{c_1,R_1},\mathcal{C}_{c_2,R_2}$ intersect at $\zeta_1,\zeta_2$ then the solutions of \eqref{10} will be $(\zeta_i,\bar{\zeta}_i),\ i=1,2$. In order for $\overline{\D}_{c_1,R_1}^\C,\overline{\D}_{c_2,R_2}^\C$ to have non-empty intersection outside $\triangle$, the corresponding solutions of \eqref{11} must lay inside the circles bounded by $\mathcal{C}_{c_1,R_1},\mathcal{C}_{c_2,R_2}$ and this happens if and only if one of the two circles surrounds the other (see \cite{T07, G12}).   
 \end{remark}
\begin{proposition}
    The set $M_c\setminus \{ (c,\bar{c})\}$ is a Levi-flat hyper-surface with boundary $\triangle \setminus \{(c,\bar{c})\}$. 
\end{proposition}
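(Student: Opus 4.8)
The plan is to realize $M_c$ as a closed one-sided piece of an explicit global Levi-flat hypersurface. Put $g(\zeta,\eta):=(\zeta-c)(\eta-\bar c)$; this is holomorphic on $\C^2$, and $\partial g=(\eta-\bar c)\,d\zeta+(\zeta-c)\,d\eta$ vanishes only at $(c,\bar c)$, so $N:=\{(\zeta,\eta)\in\C^2\setminus\{(c,\bar c)\}:\Im g(\zeta,\eta)=0\}$ is a smooth real hypersurface. Writing its equation as $\Re(-ig)=0$ with $-ig$ holomorphic, Remark \ref{remark2} applies and shows that $N$ is Levi-flat, its Levi-leaves being the complex curves $\{g=s\}$, $s\in\R$ (i.e. the complexified circles $(\zeta-c)(\eta-\bar c)=s$), each carrying the holomorphic lift $(\zeta,\eta;\partial g)$ into $\mathscr{N}^*(N)$.

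The next step is to pin down $M_c$ set-theoretically. Unravelling the definition of $\overline{\D}_{c,R}^\C$, a point $(\zeta,\eta)$ lies in $\overline{\D}_{c,R}^\C$ exactly when $\zeta\neq c$, $g(\zeta,\eta)=R^2$ and $|\zeta-c|^2\le R^2$; taking the union over $R\ge 0$ and reading off $R^2=g$ (which forces $\Im g=0$ and $\Re g=g\ge|\zeta-c|^2>0$) I get
\[
M_c=\bigl\{(\zeta,\eta):\zeta\neq c,\ \Im g(\zeta,\eta)=0,\ \Re g(\zeta,\eta)\ge|\zeta-c|^2\bigr\}\subset N .
\]
In particular $(c,\bar c)\notin M_c$, so $M_c\setminus\{(c,\bar c)\}=M_c$, and $M_c$ is precisely the part of $N$ where $\rho:=\Re g-|\zeta-c|^2\ge 0$, with the locus $\{\zeta=c\}$ (on which $\rho\equiv 0$) removed.

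To see this is a genuine manifold with boundary I would use the chart on $N\setminus\{\zeta=c\}$ given by $(s,u):=(g,\zeta-c)$, a diffeomorphism onto $\R\times\C^*$ with inverse $(s,u)\mapsto(c+u,\,\bar c+s/u)$; since $g=s>0$ throughout $M_c$, this chart covers all of $M_c$. In these coordinates $\rho=s-|u|^2$, so $d\rho$ is nowhere zero and $M_c=\{s\ge|u|^2\}$ is a smooth $3$-manifold with interior $\{s>|u|^2\}$ and boundary $\{s=|u|^2\}$. The boundary equation $g=|\zeta-c|^2$ together with $\zeta\neq c$ forces $\eta-\bar c=\overline{\zeta-c}$, i.e. $\eta=\bar\zeta$, so $\partial M_c=\{(\zeta,\bar\zeta):\zeta\neq c\}=\triangle\setminus\{(c,\bar c)\}$. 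Finally, the interior $\{s>|u|^2\}$ is open in $N$, hence Levi-flat, its Levi-leaves being the punctured discs $\{g=s,\ 0<|u|<\sqrt s\}$ (open subsets of the leaves of $N$); each closes up inside $M_c$ by adding the boundary circle $\{g=s,\ |u|=\sqrt s\}\subset\triangle$, so $T^\C M_c$ is integrable up to the boundary and $M_c$ is a Levi-flat hypersurface with boundary $\triangle\setminus\{(c,\bar c)\}$.

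The only delicate point — and the one I would be most careful about — is the set-theoretic identification above: one has to notice that $M_c$ contains no point with $\zeta=c$ (in particular not $(c,\bar c)$), even though its closure picks up the whole Levi-leaf $\{\zeta=c\}\setminus\{(c,\bar c)\}$; thus $M_c$ fails to be closed in $\C^2\setminus\{(c,\bar c)\}$, but it is still a (non-closed) embedded manifold with boundary, the missing leaf $\{\zeta=c\}$ simply not belonging to it. Everything else follows directly from the holomorphy of $g$ and from the linear form taken by $\rho$ in the $(s,u)$ chart, so I do not anticipate any substantial obstacle beyond getting this description right.
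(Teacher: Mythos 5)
Your proof is correct and follows essentially the same route as the paper: you identify $M_c$ inside the Levi-flat hypersurface $\{\Im\bigl((\zeta-c)(\eta-\bar c)\bigr)=0\}$, exhibit a chart (your coordinates $(s,u)=(g,\zeta-c)$ are just the inverse of the paper's parametrization $(\zeta,R)\mapsto(\zeta,\bar c+\frac{R^2}{\zeta-c})$, with $s=R^2$), and read off the boundary from the equality case $g=|\zeta-c|^2$, i.e.\ $\eta=\bar\zeta$. Your additional observation that $M_c$ omits the entire leaf $\{\zeta=c\}$ (so the characterization must carry the condition $\zeta\neq c$, and $M_c$ is not closed) is a worthwhile refinement of the paper's ``if and only if'' description, which as literally stated would also admit those points.
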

\begin{proof}
We first note that $M_c$ is contained in the subset of $(\zeta,\eta)\in \C^2$ such that $(\zeta-c)(\eta-\bar{c})=R^2$ for some $R$. This in turn yields
\begin{equation}
	M_c\subset \{ (\zeta,\eta) \in \C^2 |\  \Im \left( (\zeta-c)(\eta-\bar{c})\right)=0 \}
\end{equation}

and the right-hand side is a Levi flat hyper-surface in $\C^2$ except at the singular point $(c,\bar{c})$. Therefore, where $M_c$ is a manifold it is Levi-flat. A point $(\zeta,\eta)$ is in $M_c$ if and only if \begin{equation}\label{9}
    \begin{cases}
      (\zeta-c)(\eta-\bar{c})\text{ is real and non-negative} \\
     |\zeta-c|^2\le(\zeta-c)(\eta-\bar{c}). 
    \end{cases}
\end{equation}
 Where the second inequality is strict, we have that $M_c$ is charted by the following map 
\begin{equation*}
    (\zeta,R)\mapsto (\zeta,\bar{c} +\frac{R^2}{\zeta-c})
\end{equation*}
which has rank $3$ therefore at those points $M_c$ is a manifold. The boundary of $M_c$ is given when equality holds in the second equation of \eqref{9}, this happens only at points where $\eta=\bar{\zeta}$ which is $\triangle \setminus (c,\bar{c})$.
\end{proof}
In the sequel, we will adopt the following convention: for $c\in \C$ we set $r_c=\Im \left( (\zeta-c)(\eta-\bar{c})\right)$ and $g_c(\zeta,\eta)=(\zeta-c)(\eta-\bar{c})$ so that at regular points a local equation for $M_c$ is $r_c=0$.
In the next proposition we want to study the intersection of two of such manifolds. It is not restrictive to consider the intersection of $M_0$ and $M_1$ because the general case can be easily reduced to this one. 
\begin{proposition} \label{p5}
    The intersection $M_0\cap M_1 =\triangle\cup T$ where $T=\{ (\zeta,\eta)\in \R^2\ |\ (\zeta\le 0,\  \eta\le \zeta  )\text{ or } (\zeta\ge 1,\  \eta\ge \zeta)\}$. The two manifolds are transverse at $\triangle$ except along the points $(t,t)$ for $t\in\R$. 
\end{proposition}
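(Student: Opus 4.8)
The plan is to reduce everything to elementary inequalities in the real coordinates $\zeta=x+iy$, $\eta=u+iv$ via the explicit description \eqref{9} of $M_c$: a point lies in $M_0$ iff $\zeta\eta\in\R_{\ge 0}$ and $|\zeta|^2\le\zeta\eta$, and it lies in $M_1$ iff $(\zeta-1)(\eta-1)\in\R_{\ge 0}$ and $|\zeta-1|^2\le(\zeta-1)(\eta-1)$. First I would use only the two ``reality'' halves of these conditions: if $(\zeta,\eta)\in M_0\cap M_1$ then $\zeta\eta$ and $(\zeta-1)(\eta-1)=\zeta\eta-\zeta-\eta+1$ are both real, hence $\zeta+\eta\in\R$, i.e. $\Im\eta=-\Im\zeta$. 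Writing $\eta=u-iy$ one computes $\zeta\eta=xu+y^2+iy(u-x)$, so reality of $\zeta\eta$ forces $y(u-x)=0$. This dichotomy is the heart of the argument: either $u=x$, in which case $\eta=\bar\zeta$ and the point lies on $\triangle$, or $y=0$, in which case $\zeta$ and $\eta$ are both real.

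In the real case $\zeta=x$, $\eta=u$ I would rewrite the inequality conditions as $x(u-x)\ge 0$ (this is $|\zeta|^2\le\zeta\eta$) and $(x-1)(u-x)\ge 0$ (using $(x-1)(u-1)-(x-1)^2=(x-1)(u-x)$), together with $xu\ge 0$ and $(x-1)(u-1)\ge 0$. A short case analysis on the sign of $u-x$ then gives: $u>x$ forces $x\ge 1$, $u<x$ forces $x\le 0$, and in both regimes the two non-negativity constraints hold automatically, while $u=x$ is always allowed (and lands back on $\triangle$). This is precisely the set $T$. Conversely, a direct check that $\triangle\subset M_0\cap M_1$ and $T\subset M_0\cap M_1$ closes the loop and yields $M_0\cap M_1=\triangle\cup T$.

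For the transversality statement I would work with the ambient Levi-flat hypersurfaces $\{r_0=0\}$ and $\{r_1=0\}$ (which contain $M_0$ and $M_1$ respectively, by the proof of the preceding proposition), where $r_j=\Im g_j$ with $g_0=\zeta\eta$ and $g_1=(\zeta-1)(\eta-1)$ holomorphic, so $dr_j=\Im(dg_j)$, $dg_0=\eta\,d\zeta+\zeta\,d\eta$, $dg_1=(\eta-1)\,d\zeta+(\zeta-1)\,d\eta$. Since $\Im(A\,d\zeta+B\,d\eta)$ vanishes as a real $1$-form only when $A=B=0$, a relation $\lambda\,dr_0+\mu\,dr_1=0$ with $\lambda,\mu\in\R$ is equivalent to $\lambda\eta+\mu(\eta-1)=0$ and $\lambda\zeta+\mu(\zeta-1)=0$; subtracting gives $(\lambda+\mu)(\eta-\zeta)=0$. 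On $\triangle$ we have $\eta-\zeta=-2i\,\Im\zeta$, so if $\Im\zeta\ne 0$ then $\lambda+\mu=0$, and then $\lambda\eta+\mu(\eta-1)=(\lambda+\mu)\eta-\mu=-\mu=0$, i.e. $\lambda=\mu=0$: the differentials are independent and $M_0$, $M_1$ are transverse. If $\Im\zeta=0$, i.e. at a point $(t,t)$, both equations collapse to $\lambda t+\mu(t-1)=0$, which has the nontrivial solution $(\lambda,\mu)=(1-t,t)$, so the two hypersurfaces are tangent there; this is exactly the stated exception.

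The only place requiring care is the sign bookkeeping in the second paragraph, and even that is short once one observes that the two non-negativity constraints become automatic in each sign regime. I do not expect a genuine obstacle: the structural point — that the imaginary parts of the two defining functions already force a common point onto $\triangle$ or onto the real locus $\R^2\subset\C^2$ — is what makes the whole computation collapse to a line of inequalities.
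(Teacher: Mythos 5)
Your proof is correct, and its skeleton is the same as the paper's: both arguments first use reality of $\zeta\eta$ and of $(\zeta-1)(\eta-1)$ to get $\Im(\zeta+\eta)=0$ and then the dichotomy $\eta=\bar\zeta$ or $\zeta,\eta\in\R$. Where you diverge is in the execution of the two remaining steps, and in both cases your route is leaner. For the real locus, the paper parametrizes the intersection by the pair of radii $(R_1,R_2)$, solves the resulting quadratic $\zeta^2+(R_2^2-R_1^2-1)\zeta+R_1^2=0$, and runs a discriminant case analysis ($R_2\ge R_1+1$ versus $R_1-1>R_2$ versus $R_1+R_2<1$) before ``collecting all solutions''; you instead work directly with the inequality characterization \eqref{9}, reducing everything to the signs of $x(u-x)$ and $(x-1)(u-x)$. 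This avoids the quadratic entirely and, importantly, makes the converse inclusion $T\subset M_0\cap M_1$ an explicit one-line check, which in the paper is only implicit in the ``collecting'' step. For transversality, the paper tests $\C$-independence of $\partial r_0,\partial r_1$ via the determinant $\zeta-\eta$ and then inspects tangent vectors separately at real points; your formulation in terms of a real linear relation $\lambda\,dr_0+\mu\,dr_1=0$ handles both the transverse and the tangent cases in one computation and is the more directly relevant criterion (transversality of the hypersurfaces is exactly $\R$-independence of the real differentials). One shared blemish, inherited from \eqref{9} rather than introduced by you: points of the form $(0,\eta)$ with $\eta<0$ and $(1,\eta)$ with $\eta>1$ satisfy the inequalities but are not actually attained by any $\overline{\D}^{\C}_{c,R}$, so $T$ as stated overshoots $M_0\cap M_1$ by these two rays; this is harmless for the rest of the paper but worth a remark if you want to be scrupulous.
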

\begin{proof}
We begin by computing the intersection of $M_0$ and $M_1$. Every point of the intersection can be found as the intersection of two of the quadrics that form $M_0$ and $M_1$:  
\begin{equation}\label{13}
    \begin{cases}
      \zeta\eta=R^2_1 \\
      (\zeta-1)(\eta-1)=R^2_2 \\
      |\zeta|\le R_1 \\
      |\zeta -1|\le R_2
    \end{cases}
\end{equation}
after replacing $\zeta\eta$ in the second equation we have that $-\zeta-\eta =R^2_2-R^2_1-1$ which implies that $\zeta=t+iu$ and $\eta=s-iu$ have opposite imaginary parts. Replacing in the first equation, since $\zeta\eta$ has to be real, we have $(t-s)u=0$. We have either
\begin{equation}\label{19}
    t-s=0 \text{ which implies } \eta=\bar{\zeta} 
\end{equation}
or 
\begin{equation} \label{20}
 u=0 \text{ which forces } \zeta, \eta\in\R.
\end{equation}
If \eqref{19} holds then we get the points of $\triangle$ that we know belong to the boundary of $M_0$ and $M_1$. 
If $\zeta,\eta\in\R$ we have by \eqref{13} that
$\zeta^2+(R^2_2-R^2_1-1)\zeta+R^2_1=0$. From this equation we see that for only one of its roots $|\zeta|\le R_1$. To have real solutions, it must be $(R^2_2-R^2_1-1)^2\ge 4R^2_1$. If $R^2_2-R^2_1-1\ge0$ we find $R^2_2-R^2_1-1-2R_1\ge 0$ from which follows 
\begin{equation}\label{14}
R_2\ge R_1+1.    
\end{equation}
If instead $R^2_2-R^2_1-1<0$ it must be $(R_1-1)^2>R^2_2$ from which follows
\begin{equation}\label{15}
    R_1-1>R_2 \text{ or } R_1+R_2<1
\end{equation}
If \eqref{14} holds, then we find that
\begin{equation}
    \zeta= \frac{R_1^2+1-R^2_2+\sqrt{(R_1^2+1-R^2_2)^2-4R_1^2}}{2},\eta=\frac{R_1^2+1-R^2_2-\sqrt{(R_1^2+1-R^2_2)^2-4R_1^2}}{2}
\end{equation}
And this solution also satisfies the last inequality in \eqref{13}.
Collecting all solutions obtained in this way by varying $R_1,R_2$ we get the region $\zeta>\eta, \zeta<0$.
If \eqref{15} holds, the solution to \eqref{13} is
\begin{equation}
    \zeta= \frac{R_1^2+1-R^2_2-\sqrt{(R_1^2+1-R^2_2)^2-4R_1^2}}{2},\eta=\frac{R_1^2+1-R^2_2+\sqrt{(R_1^2+1-R^2_2)^2-4R_1^2}}{2}
\end{equation}
while for $R_1+R_2<1$ there are no solutions. Collecting all points of this kind, we have the region $\zeta>1, \zeta<\eta$. 
The interior of $M_0$ has equation $r_0:\frac{\zeta\eta-\overline{\zeta\eta}}i=0$ whose complex differential is $\partial r_0=\frac{\eta}i \,d\zeta+\frac{\zeta}i\,d\eta$ and similarly for $M_1$ we have $r_1:\frac{(\zeta-1)(\eta-1)-\overline{(\zeta-1)(\eta-1)}}i=0$
 and $\partial r_1 =\frac{\eta-1}i \,d\zeta+\frac{\zeta-1}i\,d\eta$.
 At points of type $(\zeta,\bar\zeta)\in \triangle$ the tangent space of $M_0$ is $T_{(\zeta,\bar\zeta)}M_0 =T_{(\zeta,\bar\zeta)}\triangle\oplus \langle X\rangle $ where $X$ is a complex tangential vector to $M_0$ at $(\zeta,\bar\zeta)$. 
 We have that $\Re \p r_0$ vanishes on $TM_0$ and $\p r_0$ vanishes on $X$. 
 Similarly, for $M_1$ we have $T_{(\zeta,\bar\zeta)}M_1=T_{(\zeta,\bar{\zeta})}\triangle\oplus\langle Y\rangle$ where $Y$ is a complex tangent vector to $M_1$ at $(\zeta,\bar{\zeta})$ so if $\p r_0$ and $\p r_1$ are independent at  $(\zeta,\bar\zeta)$ we have that $TM_0$ and $TM_1$ are transverse.
 This happens if $\left|\begin{array}{cc} 
     \eta & \zeta \\
     \eta-1 &\zeta-1 
 \end{array}\right| =-\eta+\zeta\neq 0$. At points $(\zeta,\bar\zeta)$ where $\Im\zeta \neq 0$, $M_0$ and $M_1$ are transverse. 
If $\zeta_0\in\R$ we have that in $(\zeta_0,\zeta_0)$ $M_0$ and $M_1$ have the same tangent space or better consider the two circles $\mathcal{C}_{0,|\zeta_0|}$ and $\mathcal{C}_{1,|\zeta_0 -1|} $.
The complex quadric $\mathcal{C}_{0,|\zeta_0|}^\C$ has equation
\begin{equation}
    \eta=\frac{\zeta_0^2}{\zeta},\  |\zeta|\le|\zeta_0| 
\end{equation}
and we see that the complex tangent direction to $M_0$ is given by $(-\zeta_0,\zeta_0)$ and similarly for $M_1$ a tangent direction is given by $(-\zeta_0+1,\zeta_0-1)$. We notice that for $\zeta_0<0$ or $\zeta_0>1$ the two manifolds lay on the same side of the edge. If instead $0<\zeta<1$ then $M_0$ and $M_1$ are opposite. 
\end{proof}
We have this proposition on $h$:
\begin{proposition}\label{p4}
    Let $c_1,c_2\in \C$ with $c_1\neq c_2$, $h:\C\rightarrow \C$ be a continuous function such that: $h$ extends holomorphically on every circle centered at $c_1$ or $c_2$, and $ h(\zeta)=O_\infty(|\zeta| e^{-|\zeta|})$. Then $h(\zeta)=0$ for all $\zeta$. 
    
\end{proposition}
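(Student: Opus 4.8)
The plan is to promote the hypotheses on $h$ to a genuine holomorphic extension of $h$ onto an open set and then use the growth to kill it. After the affine normalization that places the two centres at $0$ and $1$ (which turns the hypothesis into $h(\zeta)=O_\infty(|\zeta|e^{-\delta|\zeta|})$ for some $\delta>0$, the exact exponent being irrelevant), I would first record the two CR extensions already implicit in the paper: for each $R$ let $H_{0,R}\in\mathcal O(\D_{0,R})\cap \mathcal C(\overline{\D}_{0,R})$ be the holomorphic extension of $h|_{\mathcal C_{0,R}}$, and set $\tilde h^{(0)}(\zeta,\eta)=H_{0,R}(\zeta)$ for $(\zeta,\eta)\in\overline{\D}^{\C}_{0,R}$; by the Cauchy formula this coincides with the extension $\tilde h$ considered in the previous section, it is continuous and CR on the Levi-flat hypersurface $M_0\setminus\{(0,0)\}$, it is holomorphic along every Levi-leaf, and $\tilde h^{(0)}|_{\triangle}=h$. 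Define $\tilde h^{(1)}$ on $M_1\setminus\{(1,1)\}$ in the same way from the circles centred at $1$.

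The growth hypothesis enters through the maximum principle on the leaves: $|H_{0,R}(\zeta)|\le \max_{\mathcal C_{0,R}}|h|\le CRe^{-\delta R}$ for every $\zeta\in\D_{0,R}$, and likewise for $H_{1,R'}$. Hence $\tilde h^{(0)}$ and $\tilde h^{(1)}$ tend to $0$ as the radius of the leaf through a point tends to $\infty$, i.e. as one runs to infinity along $M_0$, resp. $M_1$, while on the common edge $\triangle$ the boundary value $h$ itself decays exponentially. So if one manages to extend $h$ holomorphically to a wedge $W$ with edge $\triangle$ lying over all of $\triangle\cong\C$, with holomorphic extension $\tilde H$ that is continuous up to $\triangle$ and inherits $\tilde H\to 0$ at infinity, then $|\tilde H|$ has boundary values $\to 0$ on the whole (ideal) boundary of $W$ — on $\triangle$ because $h$ decays, at infinity by the leaf estimate — and the maximum principle (a Phragmén–Lindelöf argument on $W$) forces $\tilde H\equiv 0$; restricting to $\triangle$ gives $h\equiv 0$, which is the assertion.

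The substance of the proof is therefore the wedge extension, and this is the step I expect to be the real obstacle. The mechanism must be the interaction between $M_0$ and $M_1$ described in Proposition \ref{p5}: the two Levi-flat hypersurfaces are glued along $\triangle$ (transversally away from the real diagonal, and along $T$ away from $\triangle$), so $h=\tilde h^{(0)}|_\triangle=\tilde h^{(1)}|_\triangle$ is a CR function on the maximally totally real edge $\triangle$ that CR-extends to two hypersurfaces $M_0,M_1$ sitting in genuinely different positions with respect to $\triangle$. One should combine a Baouendi–Trèves type approximation near $\triangle$ (so that $\tilde h^{(0)}$ and $\tilde h^{(1)}$ are simultaneously locally uniform limits of holomorphic polynomials) with Tumanov's Theorem \ref{tumanov}: take $M=M_1$, $N=\triangle$, and $S$ a Levi-leaf $\overline{\D}^{\C}_{1,R'}$, note that $\mathscr E=\mathscr N(M_1)$ is a real line bundle and that the parallel displacement $\Pi$ is computed along the CR curves inside the leaves via the explicit lift $\gamma^*(t)=(\gamma(t),\partial g_1(\gamma(t)))$, $g_1(\zeta,\eta)=(\zeta-1)(\eta-1)$, through the duality \eqref{duality} (Remark \ref{remark2}). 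The presence of $M_0$ provides the seed: near suitable points of $\triangle$ the set $M_0\setminus\triangle$ lies on one side of $M_1$, which together with the approximation lets $\tilde h^{(1)}$ CR-extend at such a seed point into that side in a definite direction $X_0$; Tumanov's theorem then propagates this one-sided extension along every Levi-leaf of $M_1$, and one has to verify that the transported directions $\Pi_\gamma([X_0])$ sweep out a full, non-degenerate cone over every point of $\triangle$, so that the union of the resulting one-sided extensions is an honest wedge over all of $\triangle$, into which the leafwise growth estimate passes and yields the decay of $\tilde H$ needed above. Controlling the parallel transport uniformly so that the cone does not collapse, and handling the singular points $(0,0),(1,1)$ and the tangency loci of $M_0$ and $M_1$ along $\triangle$, is where the work lies; this is also where the hypothesis $c_1\neq c_2$ (equivalently $M_0\neq M_1$) is indispensable, since without a second hypersurface there is no way to produce the seed.
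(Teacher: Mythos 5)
Your first half --- the two CR extensions $\tilde h^{(0)},\tilde h^{(1)}$ on the Levi--flat hypersurfaces $M_0,M_1$, the leafwise maximum principle giving decay of the extensions as the leaf radius grows, and the use of Tumanov's propagation theorem seeded by the transversal intersection of $M_0$ and $M_1$ along $\triangle$ --- is essentially the paper's Steps 1--2, and those steps do produce holomorphic extension to a neighborhood of $(M_0\cup M_1)\setminus\triangle$ and, via a local Bochner tube, to wedges $\mathcal{W}_0,\mathcal{W}_1$ with edge $\triangle$. The problem is your endgame, which does not close. First, even the strongest conceivable output of your wedge construction --- a holomorphic $\tilde H$ on a full neighborhood of the maximally totally real edge $\triangle$ with $\tilde H(\zeta,\bar\zeta)=h(\zeta)$ --- only says that $h$ is real-analytic as a function on $\R^2$; it does not make $h$ holomorphic in $\zeta$, and a fortiori does not make it vanish. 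Second, the Phragm\'en--Lindel\"of step is circular: the edge $\triangle$ is itself part of the boundary of the wedge $W$, and there $|\tilde H|=|h|$, which is precisely the unknown quantity. ``Boundary values tend to $0$ at infinity'' is not ``boundary values are $0$,'' so the maximum principle yields only $\sup_W|\tilde H|\le\sup|h|$ and no contradiction. A nonzero decaying $h$ with a bounded holomorphic extension to a wedge is perfectly possible, so no growth argument of this shape can finish the proof.

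The missing idea is the paper's Step 3, a Hans Lewy/Tumanov slice argument which is where both the decay hypothesis and $c_1\neq c_2$ actually do their work. For $(\zeta,\eta)\notin M_0\cup M_1$ one sets
\begin{equation*}
F(\zeta,\eta)=\frac{1}{2\pi i}\int_{E^\zeta}\frac{\tilde h(\zeta,w)}{w-\eta}\,dw,\qquad E^\zeta=E^\zeta_0\cup E^\zeta_1,\quad E^\zeta_j=\{w:\ (\zeta,w)\in M_j\},
\end{equation*}
the two sets $E^\zeta_j$ being half-lines issued from $\bar\zeta$; the hypothesis $h(\zeta)=O_\infty(|\zeta|e^{-|\zeta|})$ is used exactly here, to make the integral over these unbounded rays converge (via the leaf estimate $\tilde h(\zeta,\lambda\bar\zeta)=O(e^{-\sqrt{\lambda}})$). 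Holomorphy of $F$ in $\eta$ is immediate and in $\zeta$ follows from Morera plus Stokes on the pieces of $M_0$ and $M_1$ foliated by complex leaves, since $\tilde h$ is CR there. When $\zeta$ approaches the real axis with $\zeta<0$ or $\zeta>1$ (after normalizing $c_1=0$, $c_2=1$) the two rays $E^\zeta_0$ and $E^\zeta_1$ overlap with opposite orientations, so $F\to 0$ there; vanishing on a maximally real set forces $F\equiv 0$, and the Plemelj jump formula across $M_0\cup M_1$ then recovers $\tilde h\equiv 0$, hence $h\equiv 0$. Without some substitute for this cancellation-plus-jump mechanism, your argument cannot conclude.
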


\begin{proof}
The proof of this proposition is divided in $3$ steps which aim to prove that $h$ is holomorphic and hence $0$ by Liouville.
It is not restrictive to assume that $c_1=0$ and $c_2=1$. 
\subsubsection*{Step 1. Extension of $h$}
Since $h$ extends holomorphically on all discs centered at $0$ and $1$, we have that $h$ defines an extension $\tilde{h}_0$ on $M_0$ and another one $\tilde{h}_ 1$ on $M_1$. Since $M_0$ and $M_1$ have nontrivial intersections, further analysis is needed for these points. 
Inside $M_0\cap M_1$ there is $\triangle \setminus \{(0,0),(1,1)\}$ and at these points the two extensions match because for both $\tilde{h}(\zeta,\bar{\zeta})=h(\zeta)$.
The other points outside the diagonal will be treated in Step 3 of the proof. Outside the intersection, a continuous function $\tilde{h} : \left((M_0\cup M_1)\setminus (M_0\cap M_1)\right)\cup\triangle \rightarrow \C $ is well-defined, and moreover is $CR$.
We note that at points $(\zeta_0,\bar{\zeta}_0)$ with $\Im\zeta_0\neq 0$
the function $\tilde{h}|_\triangle$ extends to be CR on the two transverse manifolds $M_0$ and $M_1$ by Proposition \ref{p5}. Consider the complex curve $\overline{\D}^\C_{0,|\zeta_0|}\subset M_0$ which is the CR orbit of $(\zeta_0,\bar{\zeta}_0)$ inside $M_0$.
We have that $\tilde{h}$ is a CR function on $M_0$ and we have that $\tilde{h}$ CR-extends at $(\zeta_0,\bar{\zeta}_0)$ in direction $Y(\zeta_0)=(-\zeta_0+1,\bar{\zeta}_0-1)$.
By \cite{T97} this extension propagates on the points of $\overline{\D}^\C_{0,|\zeta_0|}$ in $M_0$. This means, since $M_0$ is a hypersurface in $\C^2$, that for all points $(\zeta,\eta)\in \overline{\D}^\C_{0,|\zeta_0|}$ there exists a neighborhood $U_{(\zeta,\eta)}$ and a holomorphic function $\tilde{H}$ defined on one of the two components of $U_{(\zeta,\eta)}\setminus M_0$, continuous up to the boundary and such that $\tilde{H}|_{U_{(\zeta,\eta)}\cap M_0}=\tilde{h}$. The side of extension depends continuously on the initial direction $Y(\zeta_0)$ at $(\zeta_0,\bar{\zeta}_0)$. Let $\gamma$ be a real curve inside $\overline{\D}^\C_{0,|\zeta_0}$ connecting $(\zeta_0,\bar{\zeta}_0)$ to $(\zeta,\eta)$, by Remark \ref{remark2} and by \eqref{duality} if $\p r_0$ is a non-vanishing conormal of $M_0$, the sign of $\Re\langle \p r_0 (\zeta,\eta), \Pi[Y_{\zeta_0}]\rangle$ is the same as $\Re \langle \p r_0(\zeta_0),Y(\zeta_0)\rangle$. We can repeat the same procedure by choosing a different initial point on $\overline{\D}^\C_{0,|\zeta_0|} \cap \triangle$, in this case we choose $(\zeta_0 e^{i\theta},\bar{\zeta}_0e^{-i\theta})$, and we have $Y(\zeta_0 e^{i\theta}) = (-\zeta_0 e^{i\theta}+1,\bar{\zeta}_0 e^{-i\theta}-1) $ and
$\p r_0 (\zeta_0 e^{i\theta},\bar{\zeta}_0e^{-i\theta})=\frac{1}{i}(\bar{\zeta}_0e^{-i\theta},\zeta_0 e^{i\theta})$. If we take then $\Re \langle \p r_0,Y \rangle = \frac{\bar{\zeta}_0e^{-i\theta} - \zeta_0 e^{i\theta}}{i}$ we see that for different $\theta$ it assumes values of both signs which means that at every point $(\zeta,\eta)\in \overline{\D}^\C_{0,|\zeta_0|}$ the function $\tilde{h}$ extends to both sides of $M_0$ thus $\tilde{h}$ extends holomorphically to a neighborhood of $(\zeta,\eta)$. Since we can repeat the same argument for all circles of the family, we get that $\tilde{h}$ extends holomorphically to a neighborhood of $(M_0\cup M_1)\setminus (M_0\cap M_1)$. 
\subsubsection*{Step 2. Analytic continuation to $M_0\cap M_1$.} 
We notice that $\tilde{h}_0$ extends holomorphically in a neighborhood of $M_0\setminus (M_0\cap M_1)$, using again propagation we have that $\tilde{h}_0$ extends holomorphically to a neighborhood of $M_0\setminus \triangle$. We note that near a point of  $\triangle \setminus \{(0,0)\}$ the set $M_0$ is bi-holomorphically equivalent to a half hyper-plane in $\C^2$.  By a change of coordinates 
\begin{equation*}
\begin{cases}
w_1=\zeta\eta \\
w_2=i\log\left(\frac{\eta}{\zeta}\right)
\end{cases}
\end{equation*}
we have that $M_0$ is equivalent to $\Im( w_1)=0, \Im( w_2) \ge 0$. By the classical "local Bochner's tube" 
 (see \cite{K}, \cite{T97}) we have that $\tilde{h}_0$ extends holomorphically to a wedge $\mathcal{W}_0$ with edge $\triangle\setminus (0,0)$. Note that $M_0\setminus \triangle$ is contained in the interior of $\mathcal{W}_0$. Similarly, the same holds for $\tilde{h}_1$ which extends  holomorphically to a neighborhood of $M_1\setminus \triangle$ and near the boundary to a wedge $\mathcal{W}_1$ with edge $\triangle \setminus \{(1,1)\}$.   
By Proposition \ref{p5} $M_0\cap M_1 =\triangle\cup T$ is a connected set, and $\tilde{h}_0=\tilde{h}_1$ on $\triangle$. The other points of the intersection  $(\zeta,\eta)\in T\subset \R^2$ are in the interior of $M_0$ and $M_1$, and on these points we have to prove that these two extensions match. Let $(\zeta,\eta)\in T$ which means that either $\zeta>1$ and  $\eta>\zeta$ or $\zeta<0$ and $\eta<\zeta$. Let us take a point of the first kind, and consider the curve $\gamma_{(\zeta,\eta)}(t)=(\eta,(1-t)\zeta +t\eta)$ for $0\le t\le 1$ which joins the point $(\zeta,\eta)$ to the edge $\triangle$. 
We note that in a neighborhood of the point $(\eta,\eta)\in\triangle$ the function $\tilde{h}$ extends to a holomorphic function in two wedges $\mathcal{W}_0$ and $\mathcal{W}_1$.
Since $\gamma_{(\zeta,\eta)}(t)\in \mathcal{W}_0 \cap \mathcal{W}_1$ for $t$ close to $1$ this shows that there is a wedge $\mathcal{V}$ with edge a small neighborhood of $(\eta,\eta)$ in $\triangle$ and contained in $\mathcal{W}_0 \cap \mathcal{W}_1$. It follows that on $\mathcal{V}$ the functions $\tilde{h}_0$ and $\tilde{h}_1$ are holomorphic and have the same value at the edge, hence they must coincide on $\mathcal{V}$. By analytic continuation $\tilde{h}_0 =\tilde{h}_1$ on the whole connected component of $T$ that contains $(\zeta,\eta)$. This proves that $\tilde{h}_0$ and $\tilde{h}_1$ coincide  on $T$. We call  again $\tilde{h}$ the so defined function on $M_0\cup M_1$.
\subsubsection*{Step 3. Extension of $\tilde{h}$ outside $M_0\cup M_1$} In this part of the proof we aim to prove that $\tilde{h}$ really depends only on $\zeta$ and hence that $h$ is holomorphic. To this end, we follow the idea of \cite{T07} and exploit the Hans Lewy's technique: For every $\zeta$ and $j=0,1$ let 
$$ E^\zeta_j=\{\eta\in \C | (\zeta,\eta)\in M_j \} $$
and $E^\zeta:= E^\zeta_0\cup E^\zeta_1$.
It is easy to see that $E_\zeta$ is made of two half-lines issued from $\bar{\zeta}$.  We have
\begin{equation}
        (\zeta,\eta)\in M_0 \text{ if and only if } \eta =\lambda \bar{\zeta} \text{ for some $\lambda\ge 1$}
\end{equation}
and similarly 
\begin{equation}
        (\zeta,\eta)\in M_1 \text{ if and only if } \eta =1+\lambda (\bar{\zeta}-1) \text{ for some $\lambda\ge 1$}.
\end{equation}
$E^\zeta$ divides the plane in two regions, and we put the  orientation on $E^\zeta$ as the boundary of the region containing the real line.   
With this choice of orientation we define for $(\zeta,\eta)\notin M_0\cup M_1$
\begin{equation}\label{22}
    F(\zeta,\eta):= \frac{1}{2\pi i} \int_{E^\zeta} \frac{\tilde{h}(\zeta,w)}{w-\eta}\, dw . 
\end{equation}
We note that the integral converges in spite of the fact that $E^\zeta$ is not bounded; in fact for instance we have that on $E^\zeta_0$ 
$$ \tilde{h}(\zeta,\lambda\bar{\zeta})=\frac{1}{2\pi i}\int_{|\tau|=\sqrt{\lambda}|\zeta|} \frac{h(\tau)}{\tau-\zeta}\,d\tau =O(e^{-\sqrt{\lambda}}) $$
and the last equality follows from \eqref{21bis}.This is enough to ensure that the integral converges.
Clearly $F$ is holomorphic in $\eta$, it remains to prove that it is holomorphic in $\zeta$. To this end, we adopt the technique of Hans-Lewy to prove the holomorphic regularity of $F$ by the Morera theorem. For $(\zeta_0,\eta_0)\notin M_0\cup M_1$, keeping $\eta$ fixed, let $\gamma(t)=c+\varepsilon e^{it}$ be a small circle in the $\zeta$-plane in a neighborhood of $\zeta_0$ and  let $\Gamma$ be the disc with boundary $\gamma$. We consider
\begin{equation}\label{21}
	\int_\gamma F(\zeta,\eta_0)\,d\zeta =\int_\gamma d\zeta\int_{E^\zeta} \frac{\tilde{h}(\zeta,w)}{w-\eta_0}\,dw
\end{equation} 
and prove that this integral is $0$. We decompose $E^\zeta=E^\zeta_0\cup E^\zeta_1$ and treat each part separately. 
Let 
$$ V^{c,\varepsilon}_0:=\bigcup_{0\le t\le 2\pi} E^{\gamma(t)}_0 $$
be the set parametrized by $\varphi_0:[0,2\pi]\times [1,+\infty) \rightarrow V^{c,\varepsilon}_0$,  $ \varphi_0(t,s)=(c+\varepsilon e^{it},s(\bar{c}+\varepsilon e^{-it}))$ and we put on $V^{c,\varepsilon}_0$ the orientation induced by $\varphi_0$. Let $\iota(\Gamma)$ be the disc in $\triangle$ charted by $(\zeta,\bar{\zeta}), \zeta\in \Gamma$. We have that $V_0^{c,\varepsilon}\cup \iota(\Gamma)$ is a boundary in $M_0$ which means that there is $W_0\subset M_0$ such that $\p W_0= \iota(\Gamma)\cup V_0^{c,\varepsilon}$.
This set $W_0$ is foliated by complex leaves in the following way: for every $R$ let $\zeta\eta = R^2$ be the equation of one of the complex leaves forming $M_0$. We study the intersection of this leaf with $\iota(\Gamma)\cup V^{c,\varepsilon}_0$ we have $(\zeta,\eta)\in V^{c,\varepsilon}_0$ if and only if 
$$ \zeta=c+\varepsilon e^{it}, \text{  }\eta=\frac{R^2}{c+\varepsilon e^{it}}=\frac{R^2(\bar{c}+\varepsilon e^{-it})}{|c+\varepsilon e^{it}|^2} \text{ with } |c+\varepsilon e^{it}|^2\le R^2. $$
We see that if $R^2\ge |c|^2+\varepsilon^2$ we get a curve which bounds the disc $|\zeta -c|\le \varepsilon, \eta=\frac{R^2}{\zeta}$, the disc is contained in $M_0$.
If $ |c|^2 - \varepsilon^2 \le R^2\le |c|^2+ \varepsilon^2$ the intersection of the complex curve $\zeta\eta=R^2$ with $V^{c,\varepsilon}_0$ is an open curve whose endpoints are in $\triangle$. We close this curve with an arc of the circle $\zeta\eta=R^2, \eta=\bar{\zeta}$ which is inside $\iota(\Gamma)$.   
 By Stokes formula we have


\begin{equation}
	\int_\gamma d\zeta\int_{E^\zeta_0} \frac{\tilde{h}(\zeta,w)}{w-\eta_0}\,dw -\iint_{\iota(\Gamma)}  \frac{\tilde{h}(\zeta,w)}{w-\eta_0}\,d\zeta dw=\int_{W_0} \dib \left(\frac{\tilde{h}(\zeta,w)}{w-\eta_0}\right) \,d\zeta dw =0  
\end{equation} 
where the last equality follows because $\widetilde{H}$ is CR on $M_0$.
For the same reason we have a similar formula for $E^\zeta_1$ and since the term of integration on $\iota(\Gamma)$ appears on both terms, we have
\begin{equation}
	\int_\gamma\int_{E^\zeta}\frac{\tilde{h}(\zeta,w)}{w-\eta_0} \, d\zeta dw =\int_{V_0}\frac{\tilde{h}(\zeta,w)}{w-\eta_0}
\, d\zeta dw -\int_{V_1}\frac{\tilde{h}(\zeta,w)}{w-\eta_0}
\, d\zeta dw=0
\end{equation}
which proves that $F$ is holomorphic in $\zeta$ and $\eta$. We note that in \eqref{22} when $\zeta$ tends to the real axis, for $\zeta>1$ or $\zeta<0$, we have that the two lines $E^\zeta_0$ and $E^\zeta_1$ overlap hence $F$ tends to $0$. Since $F$ tends to $0$ on a generic subset we have that $F$ is identically $0$.
By the Plemelij-Schotty formula, we have that  $\tilde{h}$ is $0$ on $M_0\cup M_1$ and hence $h(\zeta)=\tilde{h}(\zeta,\bar{\zeta})=0\ \forall \zeta$ .  
\end{proof}
\section{End of the proof of Theorem \ref{t1}}\label{S5}
We need the following 
\begin{lemma}[Phragmen-Lindel{\"o}f on a sector]\label{l5}
    Let $U$ be the half plane $\Re( w)>0$. Let $h$ be a continuous function on the closure of $U$ and holomorphic on $U$. Assume there exists two constants $C>0$ and $\alpha<1$ such that
    \begin{equation*}
        |h(w)|\le Ce^{|w|^\alpha} \text{ for all } w\in U.
    \end{equation*}
    If $|h|\le 1$ on the imaginary axis then $|h|\le 1$ on $U$. 
\end{lemma}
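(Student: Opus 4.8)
The plan is to run the classical Phragmén--Lindel\"of argument for the half-plane: multiply $h$ by an exponential factor that forces decay at infinity without increasing the modulus on $\partial U$, then invoke the ordinary maximum modulus principle on bounded pieces of $U$, and finally let the auxiliary parameter tend to $0$.

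First I would fix an exponent $\beta$ with $\alpha<\beta<1$ (this is possible precisely because $\alpha<1$). On $U=\{\Re w>0\}$ the principal branch of $w^\beta$ is holomorphic, and since $|\arg w|<\pi/2$ on $U$ we have $\arg(w^\beta)=\beta\arg w\in(-\beta\pi/2,\beta\pi/2)$, whence
\[
\Re(w^\beta)=|w|^\beta\cos(\beta\arg w)\ge \delta\,|w|^\beta,\qquad \delta:=\cos\!\Big(\frac{\beta\pi}{2}\Big)>0,
\]
for all $w\in\overline U$; on $\partial U=i\R$ this reads $\Re(w^\beta)=|w|^\beta\cos(\beta\pi/2)\ge 0$. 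For $\eps>0$ set $h_\eps(w):=h(w)\,e^{-\eps w^\beta}$, holomorphic on $U$ and continuous on $\overline U$. On the imaginary axis $|e^{-\eps w^\beta}|=e^{-\eps\Re(w^\beta)}\le 1$, so $|h_\eps|\le 1$ there; and on all of $\overline U$ the growth hypothesis gives
\[
|h_\eps(w)|\le C\,e^{\,|w|^\alpha-\eps\delta|w|^\beta}.
\]
Since $\beta>\alpha$, the right-hand side tends to $0$ as $|w|\to\infty$, so there is $R_\eps$ with $|h_\eps(w)|\le 1$ whenever $|w|\ge R_\eps$.

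Now I would apply the maximum modulus principle to $h_\eps$ on the bounded region $U\cap\{|w|<R\}$ for any $R\ge R_\eps$: its boundary is the segment of $i\R$ with $|w|\le R$, where $|h_\eps|\le1$, together with the circular arc $\{|w|=R,\ \Re w\ge0\}$, where also $|h_\eps|\le1$; hence $|h_\eps|\le1$ on $U\cap\{|w|<R\}$, and letting $R\to\infty$ yields $|h_\eps|\le1$ on all of $U$. Finally, for each fixed $w\in U$,
\[
|h(w)|=|h_\eps(w)|\,e^{\eps\Re(w^\beta)}\le e^{\eps\Re(w^\beta)}\xrightarrow[\eps\to0^+]{}1,
\]
so $|h|\le1$ on $U$, as claimed. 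The only point needing a little care is the interplay between the choice of $\beta$ strictly between $\alpha$ and $1$ and the branch of $w^\beta$ on the half-plane: it is exactly $\alpha<1$ that leaves room for such a $\beta$ and makes $|w|^\alpha-\eps\delta|w|^\beta\to-\infty$, so that $h_\eps$ is bounded on the whole of $U$; the rest is the routine maximum-modulus estimate.
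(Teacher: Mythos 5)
Your proof is correct and complete: it is the standard Phragm\'en--Lindel\"of argument for the half-plane (auxiliary factor $e^{-\epsilon w^{\beta}}$ with $\alpha<\beta<1$, so that $\Re(w^{\beta})\ge \cos(\beta\pi/2)\,|w|^{\beta}>0$ on $\overline U$, maximum modulus on truncated half-disks, then $\epsilon\to 0^{+}$). The paper does not prove this lemma at all --- it simply refers to Lang's textbook --- and your argument is exactly the one found there, so you have correctly supplied the details the authors chose to omit.
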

for a proof we refer to \cite{L99}.
\begin{proof}[Proof of Theorem \ref{t1}]
By Proposition \ref{p4} we have that for all $(z_1,z_2)\in \mathbb{S}^3$ :
\begin{equation}
	\int_\R gf\left( \frac{z_1}{iy(z_2-1)+1},1+\frac{(z_2-1)}{iy(z_2-1)+1}\right)  dy =0
\end{equation}	
We can rewrite this integral using the following change of variable $\tau=\frac{1}{\zeta(z_2-1)+1}$ from which $\zeta=\left( \frac{1}{(z_2-1)\tau} -1\right) $ and $d\zeta =-\frac{1}{\tau^2 (z_2-1)} d\tau$. The imaginary line in the $\zeta$-variable is mapped to a circle $\mathcal{C}$ through $0$ in the $\tau$-variable and we have 
\begin{equation*}
	\int_\mathcal{C} gf(\tau z_1,1+(z_2-1)\tau)\left( \frac{1}{\tau^2(z_2-1)}\right)\, d\tau =0
\end{equation*}
If we apply the same reasoning to $(z_2-1)^kf(z_1,z_2)$ we have that
\begin{equation} \label{23}
\int_\mathcal{C} gf(\tau z_1,1+(z_2-1)\tau)(z_2-1)^{k-1}\tau^{k-2}\, d\tau =0	
\end{equation}
and since \eqref{23} holds for all $k$ we have that $gf$ extends holomorphically on the disc through $(z_1,z_2)$ and $(0,1)$ for all $(z_1,z_2)\in \mathbb{S}^3$. To prove that $f$ extends holomorphically we have to divide by $g$. Let $A_z:\D \rightarrow \B^2$ be the straight analytic disc through $(0,1)$ and $z=(z_1,z_2)$ such that $A_z(1)=(0,1)$ (we assume that $A_z(\tau)$ is linear in $\tau$). Let $gf(A_z(\tau))=F(\tau)$ for $\tau\in \p\D$ where $F\in \text{Hol}(\D)\cap C^0(\overline{\D})$. We consider the function on $\D$ defined by $\f(\tau):=\frac{F(\tau)}{g(A_z(\tau))}.$ We have that $\f$ is continuous on $\overline{\D}\setminus\{ 1\}$ and that $\f(\tau)=f(A_z(\tau))$ for $\tau \in \p\D\setminus\{ 1\}$. It remains to prove that $\f$ extends continuously at $1$. To this end we consider the change of variable $\tau=\varphi(w)=\frac{w-1}{w+1}$ which interchanges the unit disc $\D$ with the half-plane $\Re(w)>0$ and sends $\tau=1$ to $\infty$. We note that since $\frac{1}{|g(A_z(\varphi (w)))|}\le C_1e^{C_2|w|^\frac{1}{2}}$, it follows that
\begin{equation*}
    |\f(\varphi(w))|\le C_3 e^{C_2|w|^\frac{1}{2}}
\end{equation*}
where $C_1$, $C_2$ and $C_3$ are constants. Since $\f$ is bounded on $\p \D\setminus\{ 1\}$ by the supremum of $f$ we have by Lemma \ref{l5} that $\f(\varphi(\cdot))$ is uniformly bounded on the closed half-plane, hence $\f$ is uniformly bounded on $\D$. It follows that $\f$ has a boundary value on $\p\D$ which coincides with the continuous function $f(A_z(\tau))$ and we have the conclusion.
\end{proof} 
Thanks to Theorem \ref{t1} we are now in position to apply \cite{G12} and prove Corollary \ref{C1}. 
\begin{proof}[Proof of Corollary \ref{C1}]
After a rotation, assume that $p=(0,1)$. By Theorem \ref{t1} we have that $f$ extends holomorphically on the lines concurrent to $p$. The line joining $p$ and $c$ intersects the ball $\B$ so we can apply Corollary 1.3 of \cite{G12} and so $f$ is the boundary value of a holomorphic function $F\in \text{Hol}(\B^2)$. 
\end{proof}

 								\bibliographystyle{alpha}

 							\end{document}